\newcommand{\Z}{{\mathbb Z}} 
\newcommand{\Q}{{\mathbb Q}} 
\newcommand{\C}{{\mathbb C}}
\newcommand{\D}{{\mathbb D}} 
\newcommand{\cO}{{\mathcal O}}
\newcommand{\cG}{{\mathcal G}}
\newcommand{\cris}{{\textup{cris}}} 
\DeclareMathOperator{\End}{End}
\DeclareMathOperator{\Gal}{Gal}
\newcommand{\gp}{{\mathfrak p}}
\newcommand{\go}{{\mathfrak o}}
\newcommand{\gr}{{\mathfrak r}}
\newcommand{\ga}{{\mathfrak a}}
\newtheorem{thm}{Theorem}
\newtheorem*{KCT} {Theorem A (Katz)}
\newtheorem{prop}{Proposition}
\newtheorem{lemma}{Lemma}
\newtheorem{cor}{Corollary}
\theoremstyle{remark}
\newtheorem{rem}[]{Remark}
\newcommand{\itop}[2]{\genfrac {}{}{0pt}{3}{#1}{#2} }
\begin{document}

\author[P. Guerzhoy]{P. Guerzhoy}
\address{ 
Department of Mathematics,
University of Hawaii, 
2565 McCarthy Mall, 
Honolulu, HI,  96822-2273 
}
\email{pavel@math.hawaii.edu}

\title[]{On the $p$-adic values of the weight two Eisenstein series for supersingular primes}
\keywords{elliptic curves, $p$-adic modular forms, formal groups}
\subjclass[2020]{11F33, 14H52, 11S31}
\begin{abstract}
The weight two Eisenstein series may be considered as the first example of a Katz $p$-adic modular form. Classically, its values are defined for the primes of ordinary reduction. 
We investigate a modified definition which applies uniformly to all primes of good reduction, both ordinary and supersingular. 
We show that, in the case of complex multiplication, these $p$-adic values coincide with the algebraic value of this Eisenstein series.
\end{abstract}

\maketitle

\section {Introduction}

The Ramanujan series 
\[
P =1 -24\sum_{n \geq 1} q^n \sum_{d | n} d \hspace{5mm} \text{($q=\exp(2 \pi i \tau)$ with $\Im(\tau)>0$ throughout)}
\]
narrowly misses being a modular form. The closely related function  standardly denoted by $E_2^*(\tau)$ and defined by
\[
E_2^*(\tau) = P(\tau) - \frac{3}{\pi \Im(\tau)}
\]
is modular of weight $2$ (and trivial level) and can be considered as an algebraic modular form over $\C$ in the sense of \cite[2.1.1]{Katz_RAnEis}. 
We recall the construction now. Consider a  lattice $\Lambda \subset \C$. The elliptic curve $E=\C/\Lambda$ with nowhere-vanishing invariant differential $dz$ 
admits an affine  Weierstrass model
\begin{equation} \label{eq_we}
E \ : \ y^2=4x^3-g_2x-g_3,
\end{equation}
with  nowhere-vanishing invariant differential 
\[
\omega = dx/y,
\]
where 
\[
g_2= g_2(\Lambda)=60 \sum_{\itop{m \in \Lambda}{m \neq 0}} m^{-4}, \hspace{4mm} 
g_3= g_3(\Lambda) =140 \sum_{\itop{m \in \Lambda}{m \neq 0}} m^{-6}.
\]
Then $\omega$ and $\bar{\omega}$ form a $\C$-basis  of $H^1_{DR}(E/\C)$ and one has
for $\eta := xdx/y \in H^1_{DR}(E/\C)$ that
\[
\eta + A\omega + B\bar{\omega} =0 \hspace{4mm} \textup{in  $H^1_{DR}(E/\C)$.}
\]
Equivalently, the function on the complex plane $\C$
\begin{equation} \label{eq_zeta_A_B}
\zeta(z;\Lambda) -Az-B\bar{z} \hspace{4mm} \textup{is $\Lambda$-periodic},
\end{equation}
where $\zeta(z;\Lambda)$ is the Weierstrass $\zeta$-function associated with the lattice $\Lambda$.
The quantity  $A=A(\Lambda)$ is thus defined as the coefficient of $\omega$ in the decomposition of $\eta$ with respect to the basis of $H^1_{DR}(E/\C)$ which consists of $\omega$ and a (anti-holomorphic) differential $\nu$ characterized by the property 
\begin{equation} \label{eq_non_hol}
\nu(cz) = \bar{c}\nu(z).
\end{equation}
(The space of such differentials $\nu$ is one-dimensional and $\bar{\omega}$ is one of them.)
This way, the quantity $A=A(\Lambda)$  becomes the value of a weight $2$ modular form (on $SL(2,\Z)$) defined over $\C$. 
The relationship between the two modular forms
\[
A(\Lambda)= \frac{\pi^2}{3\omega_1^2} E_2^*(\omega_2/\omega_1)
\]
for any basis $\Lambda = \langle \omega_1,\omega_2 \rangle$ with $\Im(\omega_2/\omega_1)>0$ 
follows from \cite[Lemma 1.3.6]{Katz_RAnEis}.

While  $A(\Lambda)$ is an algebraic modular form over $\C$, the function $E_2^*(\tau) $ is manifestly non-holomorphic and (see \cite[2.4]{Katz_RAnEis})  $A(\Lambda)$  is not an algebraic modular form over $\Z$ as the $q$-expansion of $P$ would suggest. In particular, there is no reason to expect the quantity $12A(\Lambda) \in \C$ to belong to $\Z[g_2,g_3]$, or even be an algebraic number whenever $g_2$ and $g_3$ are algebraic.  There is a notable exception: the number $A(\Lambda)$ becomes algebraic together with $g_2$ and $g_3$  whenever the elliptic curve $E$ has complex multiplication (see \cite[Lemma 4.0.7]{Katz_RAnEis} or, alternatively, for an elementary short proof, \cite[Lemma 3.1]{Masser} ). From now on, we assume that the elliptic curve $E$ is defined over an algebraic number field $H$ (i.e. $g_2,g_3 \in H$) which we assume to be a normal extension of $\Q$ by passing to the normal closure of $\Q(g_2,g_3)$ if necessary. 

For a rational prime $p>3$, Ramanujan's series $P(q)$ is the $q$-expansion of a $p$-adic modular form in the sense of Katz \cite[A2.4]{Katz_properties}, \cite[Lemma 5.7.8]{Katz_RAnEis}. 
Specifically, assume that $p$ is not ramified in $H$. 
For a prime $\gp$ of $H$ above $p$, assume that $E$ has good {\sl ordinary} reduction at $\gp$. Denote by 
\[A^{(\gp)}(\Lambda):= P(E,\omega) \in \hat{H_\gp}
\]
 the value of this $p$-adic modular form  (see \cite[5.10.6]{Katz_RAnEis}) which is defined as an element of the field of fractions of the $\gp$-adic completion $\hat{\cO_\gp}$ of $\cO_\gp$, the localization at $\gp$ of the ring of integers $\cO \subset H$. 

In the case when the elliptic curve $E$  has complex multiplication, we thus have independently defined quantities $A(\Lambda) \in H \subset  \hat{H_\gp}$  
and $A^{(\gp)}(\Lambda) \in \hat{H_\gp}$. These two quantities in fact coincide, which is the essential special case (see the proof of  \cite[Lemma 8.0.13]{Katz_RAnEis}) of the Comparison Theorem 
 \cite[Theorem 8.0.9]{Katz_RAnEis} which we now quote.

 \begin{KCT} \label{KCT}

 Assume that the elliptic curve $E$ admits a Weierstrass equation \textup{(\ref{eq_we})} with $g_2,g_3 \in \cO$ and has complex multiplication by an order in an imaginary quadratic field 
 $K \subseteq H$. Let $\gp$ be a place of $H$ such that $E$ has good ordinary reduction at $\gp$. Then 
 \[
 A^{(\gp)}(\Lambda) = A(\Lambda).
 \]
 In particular,  the value 
  $A^{(\gp)}(\Lambda) \in \hat{H_\gp}$
in fact lies in $H \subset \hat{H_\gp}$ and is independent of $\gp$. 
 
 \end{KCT}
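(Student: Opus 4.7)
The idea is to identify both $A(\Lambda)$ and $A^{(\gp)}(\Lambda)$ as the $\omega$-component of $\eta$ with respect to two \emph{a priori} different splittings of the Hodge filtration on $H^1_{DR}(E)$, and then to use complex multiplication to force these two splittings to coincide. On the complex side, $A(\Lambda)$ is by construction the coefficient of $\omega$ in the decomposition $\eta = -A(\Lambda)\omega - B(\Lambda)\bar\omega$ along $H^1_{DR}(E/\C) = \langle\omega\rangle \oplus \langle\bar\omega\rangle$. On the $p$-adic side, Katz's construction in \cite[5.10]{Katz_RAnEis} realises $A^{(\gp)}(\Lambda) = P(E,\omega)$ as the $\omega$-coefficient of $\eta$ along the \emph{unit root splitting} of $H^1_{DR}(E/\hat{H_\gp}) \cong H^1_{\cris}(E/\hat{H_\gp})$: the complementary line $U$ is the unique line on which the crystalline Frobenius acts as a unit, which exists precisely because $E$ is ordinary at $\gp$. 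Thus the theorem reduces to showing, inside $H^1_{DR}(E/H) \otimes_H \hat{H_\gp}$, that $U$ coincides with the base change of $\langle\bar\omega\rangle$ from $\C$.

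Complex multiplication produces an intrinsic candidate that both splittings must equal. The $K$-action on $E$ induces a $K$-action on the $H$-module $H^1_{DR}(E/H)$, and because $K \subseteq H$ this action decomposes $H^1_{DR}(E/H)$ into two one-dimensional $K$-isotypic pieces corresponding to the two embeddings $\phi_1, \phi_2 : K \hookrightarrow H$. One of these, say $V_{\phi_1}$, is the Hodge line $\langle\omega\rangle$; the other, $V := V_{\phi_2}$, is a canonically defined complement and is the \emph{unique} $K$-stable complement to $\langle\omega\rangle$. Both target splittings agree with $V$ after base change: on the complex side, complex conjugation makes the CM act on the anti-holomorphic line $\langle\bar\omega\rangle$ through the conjugate embedding $\phi_2$, so $V \otimes_H \C = \langle\bar\omega\rangle$; on the $p$-adic side, the ordinariness of $E$ at $\gp$ implies $p$ splits in $K$, the crystalline Frobenius commutes with the $K$-action and has eigenvalues matching the two embeddings $K \hookrightarrow \hat{H_\gp}$, and the Hodge line $\langle\omega\rangle$ necessarily carries the non-unit eigenvalue (as is standard for ordinary reduction), so the unit root complement $U$ must equal $V \otimes_H \hat{H_\gp}$.

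Combining these base-change identifications shows that the complex and unit root splittings coincide, hence so do the $\omega$-components of $\eta$, yielding $A(\Lambda) = A^{(\gp)}(\Lambda)$. The main obstacle I anticipate is the step matching Katz's $p$-adic modular form $P(E,\omega)$ — originally phrased via the formal group and the differential $\omega$ — with the unit root splitting of crystalline cohomology, together with verifying that $\omega$ lies on the non-unit side of Frobenius; this is essentially the substantive content of the ordinary-case Comparison Theorem in \cite{Katz_RAnEis}. Everything else — the uniqueness of $K$-equivariant splittings and the complex identification — is a formal consequence of having a semisimple CM action on $H^1_{DR}(E/H)$ with two distinct characters.
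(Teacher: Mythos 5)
Your proposal is correct and follows essentially the same route as the proof the paper attributes to Katz (\cite[Lemma 8.0.13]{Katz_RAnEis}): identify the unit root splitting of $H^1_{DR}$ with the $\bar{c}$-eigenspace of the CM action, match the latter with the anti-holomorphic line $\langle\bar\omega\rangle$ on the complex side, and read off the $\omega$-component of $\eta$ in both splittings. You also correctly isolate the one substantive input — that Katz's $P(E,\omega)$ computes the $\omega$-coefficient of $\eta$ along the unit root splitting — as the part that must be imported from \cite{Katz_RAnEis} rather than rederived formally.
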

 
We now  describe 
another way to define the $p$-adic quantity $A^{(\gp)}(\Lambda) \in \hat{H_\gp}$. 
 We do not assume any complex multiplication for the time being. 
We assume that $p>2$ is unramified in $H$ and that (\ref{eq_we}) has good ({\sl either ordinary or supersingular}) reduction at $\gp$ (in particular, $g_2,g_3 \in \cO_{\gp}$).
The formal group law $\hat{E}$ associated with $E$ (see (\ref{eq_fg_def}) for the definition)  is defined over $\cO_{\gp}$. 
Let 
\[
l(t) = \sum_{n \geq 1} \frac{b(n)}{n} t^n \hspace{4mm} \text{with $b(1)=1$}
\] 
be the logarithm of a formal group law $F$ strictly isomorphic to $\hat{E}$ over $\cO_\gp$. 
 For $b \in H \subset \hat{H_\gp}$, we write $b^\phi$ for the action of Frobenius (see Section \ref{sec_FG} for details).
Let 
\[
l^\phi(t) := \sum_{n \geq 1}  \frac{b(n)^\phi}{n} t^n.
\]

It is known (see Proposition \ref{prop_lambda_mu} below for details) that there exist $\lambda, \mu \in \hat{\cO_\gp}$ such that the formal power series
\begin{equation} \label{eq_zeta_lambda_mu}
\zeta(l(t); \Lambda) - \lambda l(t) - \mu \frac{1}{p} l^\phi(t^p) \in \frac{1}{t} + \hat{\cO_\gp}\llbracket t \rrbracket
\end{equation}
has all $\gp$-integer coefficients. Moreover, one can and we will always assume that $\mu=0$ whenever the elliptic curve $E$ is ordinary at $\gp$. Under this assumption, the quantities $\lambda$ and $\mu$ are defined uniquely and we shall show that they do not depend on the choice of a formal group law $F$ (see (\ref{eq_zeta_F_lambda_mu}) below). 
Define $A^{(\gp)}(\Lambda) \in  \hat{H_\gp}$ to be the coefficient $\lambda$ in (\ref{eq_zeta_lambda_mu}):
\begin{equation} \label{eq_def}
 A^{(\gp)}(\Lambda) := \lambda.
\end{equation}
The  definition is motivated by the parallelism between (\ref{eq_zeta_lambda_mu}) and (\ref{eq_zeta_A_B}). 
A discussion leading to this definition can be found in \cite[Section 4]{BKY}, where the parallelism between the functions $\bar{z}$ and $l^\phi(t^p)/p$  is observed. 
This definition introduces the quantity  $A^{(\gp)}(\Lambda) \in \hat{\cO_\gp}$ for all but finitely many places $\gp$.
Definition (\ref{eq_def}) differs substantially from the original Katz definition even in the ordinary reduction case; in particular, it involves neither trivialization nor consideration of a canonical subgroup. 
It follows from an observation of Mazur and Tate \cite[Proposition 3]{MT} that our  $A^{(\gp)}(\Lambda)$ coincides with the value of the $p$-adic Eisenstein series of weight $2$ originally defined by Katz whenever the Weierstrass model 
(\ref{eq_we}) 
has good ordinary reduction at $\gp$. 

\begin{rem}

Let us indicate some directions to generalize definition (\ref{eq_def}) which we do not pursue in this paper primarily because the reduction of the involved group 
$\hat{E}$ has height one, and everything can be formulated and proved within the original framework of \cite{Katz_properties,Katz_RAnEis}.

One may  incorporate the case of multiplicative reduction. Namely, one can start with a consideration of $(E,\omega)$ defined by (\ref{eq_we}) instead of a lattice $\Lambda$. That allows for consideration of a singular cubic (with a node). 

One may also consider elliptic curves over other rings. For example, the consideration of the Tate elliptic curve $\textup{Tate}(q)$ defined over the ring $\Z((q))$ yields 
$\lambda=\frac{1}{12}P(q) \in \frac{1}{12}\Z\llbracket q \rrbracket$, the Ramanujan series, in order to satisfy  (\ref{eq_zeta_lambda_mu}) for all primes $p>3$.

\end{rem}

There is of course a great deal of indeterminacy in choosing $\Lambda$ such that $E=\C/\Lambda$ satisfies all requisite conditions: if it does, then so does $\Omega\Lambda$ for any $\Omega \in \cO^*_{\gp}$. It is easy to see that $ A^{(\gp)}(\Lambda)$ behaves as a weight $2$ modular form
\[
 A^{(\gp)}(\Omega\Lambda) = \Omega^{-2} A^{(\gp)}(\Lambda) \hspace{3mm} \textup{ for $\Omega \in \cO^*_{\gp}$} 
\]
with respect to these transformations. 


Let now $E$ have complex multiplication. Specifically, let $K$ be an imaginary quadratic extension of rationals $\Q$, and let $\go_K \subset K$ be its ring of integers. 
The endomorphism ring $\go= \End(E)$ is an order  $\go \subset \go_K$. Let $\ga$ be a proper $\go$-ideal which we may think of as a lattice  $\ga \subset \C$.  Let $H=K(j(\ga))$ denote the ring class field of $\go$, and choose $\Omega \in \C^*$ such that  $g_2,g_3 \in \cO_\gp \subset H$, where
\begin{equation} \label{g_Omega}
g_2= g_2(\Omega \ga)= \Omega^{-4} g_2(\ga) 
\hspace{4mm} 
g_3= g_3(\Omega \ga) = \Omega^{-6} g_3(\ga). 
\end{equation}
With  $\Lambda=\Omega\ga$, we have $E=\C/\Lambda \simeq_{\C} \C/\ga$ with affine model  (\ref{eq_we}) 
(note that any elliptic curve with complex multiplication admits such a description by 
\cite[Proposition 4.8] {Shimura_arith}).  We thus have the complex number $A=A(\Lambda)$ determined by (\ref{eq_zeta_A_B}) which we can by \cite[Lemma 4.0.7]{Katz_RAnEis} and will consider as an element of $H$. We also have   $A^{(\gp)}(\Lambda) \in \hat{H_\gp}$ introduced in definition (\ref{eq_def}).

The main result of this paper is the following extension of Theorem A to the case of supersingular reduction.

\begin{thm} \label{th_main}
Let $\gp$ be a place of $H$ such that \textup{(\ref{eq_we})} has good reduction at $\gp$. 
In particular, we assume that $\gp$ lies above a rational prime $p > 3$ which divides neither the discriminant of $K$ nor the conductor $f=[\go_K :\go]$.
Then 
 \[
 A^{(\gp)}(\Lambda) = A(\Lambda).
 \]
 In particular, the value 
  $A^{(\gp)}(\Lambda) \in \hat{H_\gp}$
in fact lies in $\cO_{\gp} \subset H \subset \hat{H_\gp}$ and is independent of $p$. 

 \end{thm}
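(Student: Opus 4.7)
The plan is to establish the theorem by verifying that $A(\Lambda) \in H \subset \hat{H_\gp}$ itself satisfies the defining property (\ref{eq_zeta_lambda_mu}) of $A^{(\gp)}(\Lambda)$: concretely, I would produce some $\mu \in \hat{\cO_\gp}$ for which
\[
\zeta(l(t);\Lambda) - A(\Lambda)\, l(t) - \mu \cdot \tfrac{1}{p} l^\phi(t^p) \in \tfrac{1}{t} + \hat{\cO_\gp}\llbracket t \rrbracket.
\]
Combined with the uniqueness of $\lambda$ asserted in Proposition~\ref{prop_lambda_mu}, this forces $A^{(\gp)}(\Lambda) = A(\Lambda)$; the remaining conclusions of the theorem (the common value lies in $\cO_\gp$ and is $\gp$-independent) then follow immediately, since $A(\Lambda)$ is defined $\gp$-independently by (\ref{eq_zeta_A_B}).

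In the good ordinary case the conclusion is already in the literature: the Mazur--Tate comparison cited in the introduction shows that definition (\ref{eq_def}) reproduces Katz's classical Eisenstein value, and Theorem~A then gives $A^{(\gp)}(\Lambda) = A(\Lambda)$. The new content is therefore the supersingular case, which is what I would address directly.

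The central tool is a non-trivial CM endomorphism $\alpha \in \go \setminus \Z$; by the hypothesis $p \nmid f$ and $p \nmid \operatorname{disc}(K)$, $\alpha$ lifts to a power series $[\alpha](t) \in \cO_\gp\llbracket t \rrbracket$ on $\hat{E}$ satisfying $l([\alpha](t)) = \alpha\, l(t)$. On the complex-analytic side, the $-1$-homogeneity identity $\zeta(\alpha z;\Lambda) = \alpha^{-1} \zeta(z;\alpha^{-1}\Lambda)$, combined with the $\Lambda$-periodicity of $\zeta(z;\Lambda) - A(\Lambda)z - B(\Lambda)\bar z$ and the CM action $\alpha^*\eta \equiv \bar\alpha\,\eta$ in $H^1_{DR}(E/\C)$, characterizes $A(\Lambda)$ as the unique scalar making $\eta + A(\Lambda)\omega$ lie in the $\bar\alpha$-eigenspace for the CM action. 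On the $\gp$-adic side, substituting $t \mapsto [\alpha](t)$ in the integrality condition (\ref{eq_zeta_lambda_mu}), applying $l([\alpha](t)) = \alpha\, l(t)$, and tracking the Frobenius action on the coefficients of $l^\phi$ produces a transformed integrality identity whose $l(t)$-coefficient is pinned down, by the same $\bar\alpha$-eigenspace argument, as the unique scalar compatible with the CM action on $H^1_{\cris}(E/\hat{\cO_\gp})$. Matching the two characterizations through the standard comparison between $H^1_{DR}$ and $H^1_{\cris}$ then gives $\lambda = A(\Lambda)$ and produces the required $\mu \in \hat{\cO_\gp}$.

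The main obstacle will be the precise matching in the supersingular setting, where $\hat{E}$ has height $2$ and Frobenius $\phi$ on $H^1_{\cris}$ has no unit-root subspace to single out a Frobenius-equivariant complement to $\omega$. The role played in the ordinary case by the unit-root eigenspace must be replaced by the $\bar\alpha$-eigenspace for the CM action, and one must verify that the decomposition $\eta = -A(\Lambda)\omega - B(\Lambda)\bar\omega$ really does match the formal-group identity (\ref{eq_zeta_lambda_mu}) term by term when $z = l(t)$. The inert hypothesis (forced by supersingularity together with $p \nmid \operatorname{disc}(K)$) ensures that $\alpha$ and $\bar\alpha = \alpha^\phi$ are Frobenius-conjugates, which is precisely what breaks the symmetry between the $\omega$ and $\bar\omega$ components and isolates $A(\Lambda)$ from $\mu$; the technical heart of the argument consists in turning this observation into a quantitative congruence of power series modulo $\hat{\cO_\gp}\llbracket t\rrbracket$.
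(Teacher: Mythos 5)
Your skeleton matches the paper's: you use a CM endomorphism $\alpha\in\go\setminus\Z$, lift it to a power series on $\hat E$ with $l([\alpha](u))=\alpha\,l(u)$, substitute into the integrality relation \eqref{eq_zeta_lambda_mu}, and exploit $\alpha^\phi=\bar\alpha$ in the inert (supersingular) case to separate $\lambda$ from $\mu$; the ordinary case is indeed dispatched by Mazur--Tate together with Theorem~A, exactly as in the paper. What is missing is the decisive archimedean-to-$\gp$-adic input, which you flag as ``the technical heart'' but do not carry out, and for which your proposed substitute --- ``the standard comparison between $H^1_{DR}$ and $H^1_{\cris}$'' --- is not a workable argument as stated. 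In the supersingular case there is no unit-root subspace and no canonical Frobenius-stable complement to $\omega$, so there is nothing standard to match against; moreover, what is needed is not merely that $\eta+A(\Lambda)\omega$ spans the $\bar\alpha$-eigenline (that only pins down $A(\Lambda)$ as an element of $H$), but a genuine \emph{integrality} statement at $\gp$.

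The paper supplies this in Proposition \ref{prop_zeta_alpha_zeta_integral} by entirely elementary means: the function $\zeta(\alpha z;\Lambda)-\bar\alpha\,\zeta(z;\Lambda)-A(\Lambda)(\alpha-\bar\alpha)z$ is an odd $\Lambda$-periodic meromorphic function, hence of the form $\wp'(z)P(\wp(z))/Q(\wp(z))$; a residue computation at the $\alpha\bar\alpha$-division points shows that every numerator in its partial-fraction decomposition equals $1/\alpha$, while the poles $s_m$ are $\gp$-integral division values of $\wp$. Clearing one integer denominator $M$ and substituting $z=l_{\hat E}(u)$, using \eqref{eq_wp_wpprime_integral}, gives the required congruence modulo $\cO_\gp\llbracket u\rrbracket$. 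Subtracting this from the $\gp$-adic relation \eqref{eq_fg_zeta_zeta} leaves $M(\lambda-A(\Lambda))(\alpha-\bar\alpha)\,l_{\hat E}(u)\in\hat{\cO}_\gp\llbracket u\rrbracket$, and one concludes $\lambda=A(\Lambda)$ because the finite height of the reduction of $\hat E$ forces $l_{\hat E}$ to have unbounded denominators --- a last step your plan also leaves implicit. Until you either prove such an integrality proposition or make the cohomological matching precise, the argument is incomplete at exactly the point where the supersingular case differs from Katz's.
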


The proof of Theorem A (see \cite[Lemma 8.0.13]{Katz_RAnEis} for details) is based on the decomposition 
\[
H^1_{DR}(E \otimes \hat{\cO_{\gp}}/\hat{\cO_{\gp}}) \otimes \hat{H_{\gp}} \simeq H_{DR}^1(E/H) \otimes_K \hat{K}_{\gp} = U_\gp \oplus U'_\gp
\]
into the eigenspaces for the action of the $\left| \cO_\gp/\gp \right|$-th power Frobenius endomorphism. This decomposition is  combined with the observation that, for the unit root eigenspace $U_\gp$, the intersection $U_\gp \cap H^1_{DR}(E/K)$ coincides with the $\bar{c}$-eigenspace for $c \in \go = \End(E)$ ($c \notin \Z$). The parallelism with the decomposition $H^1_{DR}(E/\C) \simeq \langle \omega \rangle \oplus \langle \bar{\omega} \rangle$ with $\bar{\omega}$ satisfying  (\ref{eq_non_hol}) is employed in the proof. 
The unit root eigenspace exists  when the elliptic curve $E$ has ordinary reduction at $\gp$ (a generalization in terms of the canonical subgroup is available only for those  supersingular elliptic curves whose Hasse invariant is ``not too near zero", see \cite[Section 3 and Appendix 2]{Katz_properties}). 

Our approach to the proof of Theorem \ref{th_main} in Section \ref{sec_proof_main} is quite different while parallel. Instead of the two-dimensional vector space $H^1_{DR}$ as above we consider (see \eqref{eq_dieud} below for the definition) the $\cO_\gp$-module $\D(F) \simeq H^1_\cris(F)$, where $F$ is any formal group law isomorphic to $\hat{E}$ over $\cO_\gp$. 
This module has rank $1$ in the case when the reduction of $E$ at $\gp$ is ordinary, and rank $2$ when the reduction is supersingular. In the latter case, which is the case of primary interest for us, the basis $\{l(t), l^\phi(t^p)/p\}$ turns out to be an eigenbasis for $c \in \go = \End(E) \hookrightarrow \End(F)$ ($c \notin \Z$) with eigenvalues $c$ and $\bar{c}$ correspondingly. We lay out the details in Section \ref{sec_FG}.  It is now in the supersingular reduction case when we employ the parallelism with the decomposition $H^1_{DR}(E/\C) \simeq \langle \omega \rangle \oplus \langle \bar{\omega} \rangle$ with $\bar{\omega}$ satisfying  (\ref{eq_non_hol}) to obtain our result.

In Section \ref{sec_modular_proof}, we present an alternative approach to the proof of Theorem \ref{th_main} which is based on the modularity of the elliptic curves with complex multiplication. We can obtain only a slightly weaker result on this way. 
Assume that there exists a non-trivial map $\phi: X_1(N) \rightarrow E = \C/\Lambda$ defined over $H$. The modular parametrization $\phi$ exists when $E$ is a factor of $J_1(N)$ as in  \cite{Shimura71}. Then one can assume that $\phi$ takes the cusp $i \infty$ to the neutral element of $E$, and let $g=\sum b(n) q^n$ be defined by $\phi^*\omega = q^{-1}g(q)dq$. Observe that $g$ is a CM cusp form (which in general is not a Hecke eigenform), and $b(1)\neq 0$. The renormalization $\Lambda \mapsto b(1)^{-1} \Lambda$ allows us to assume that $b(1)=1$. We associate a formal group law $\cG$ to $g$ (see (\ref{eq_lG_def})), and prove (Theorem \ref{th_honda_gen}) that, for all but finitely many primes $\gp$ of $H$, there is an isomorphism
of the formal group laws $\cG \simeq \hat{E}$ over $\cO_{\gp}$. (Theorem \ref{th_honda_gen} may be considered as a variation on the theme of a result of Honda  \cite[Theorem 5]{Honda}.)

We build on the character twist idea as it is used in a similar situation in \cite[proof of Theorem 1.3]{BOR}.
Specifically, consider the pull back to $X_1(N)$ of  (\ref{eq_zeta_A_B}) and its twist with the quadratic character associated with the extension $K/\Q$. 
Exploiting the CM property of $g$ allows us to obtain the result (Theorem \ref{th_main_modular}).

\section*{Acknowledgements}

The author is very grateful to Barry Mazur and Ken Ono for their  inspiring and encouraging  remarks.
The author thanks the referee for the remarks which helped the author to improve the presentation.
In particular, following a suggestion of the referee, the author supplied the arXiv version of the manuscript with an appendix 
which contains and explains a code producing the specific numerical evidence described in Section \ref{sec_ex}.

\section{Example} \label{sec_ex}

The numerical calculations were carried out using PARI/GP \cite{PARI2}.

Let $K=\Q(\sqrt{-15})$, and 
let $E$ be the elliptic curve with complex multiplication by $\go_K$ with $j=-85995w - 52515$ such that $w^2-w-1=0$. The curve $E$ is defined over $\Q(j)=\Q(w)=\Q(\sqrt5)$ by 
\begin{equation} \label{eq_we_example}
y^2=4x^2-g_2x-g_3
\end{equation}
with
\[
g_2=11505w + 7110 \hspace{4mm} \textup{and} \hspace{4mm} g_3=356720w + 220465.
\]
The norm of the discriminant is $$\mathcal{N} (\Delta)= \mathcal{N}(-68647176000w - 42426288000) = 2^{12}3^65^6,$$ and the elliptic curve (\ref{eq_we_example}) has good reduction for every prime $\gp$ of the Hilbert class field $H=\Q(\sqrt{-15},\sqrt{5})$ lying above a prime $p>5$.

The two embeddings $K \hookrightarrow \C$ yield two values of $\tau$ in the (boundary of the) fundamental domain in the upper half-plane. These are $\tau = (1+\sqrt{-15})/4$ corresponding to $w \mapsto (1-\sqrt{5})/2$ and $\tau=(-1+\sqrt{-15})/2$ corresponding to $w \mapsto (1+\sqrt{5})/2$.

In either case, we have
\[
A= \frac{1}{12} \frac{4\pi^2}{\omega_1^2}E_2^*(\tau) = \frac{1}{12}(126w + 78) = \frac{21w+13}{2}.
\]

The formal group $\hat{E}$ associated with the Weierstrass equation (\ref{eq_we_example}) is defined over  $\go_{\Q(\sqrt5)}\left[\frac{1}{2}\right]$. Specifically, we pick $t=-2x/y$ as the parameter and compute the formal group logarithm

\begin{multline*}
l_{\hat{E}}(t) = \int\frac{dx}{y} = 
t + \left(-711 - \frac{2301}{2}w \right)t^5 + \left( -\frac{94485}{4} - 38220w \right)t^7 \\
+ \left( \frac{60972375}{8} + \frac{98655375}{8}w \right)t^9 + \left( \frac{1288993125}{2} + \frac{4171269375}{4}w \right)t^{11} \\
+ \left( -\frac{200868706875}{2} - 162506197500w \right)t^{13} + \mathcal{O}(t^{15}).
\end{multline*}

The Weierstrass $\zeta$-function has the expansion
\begin{multline*}
\zeta(z) = \frac{1}{z} + \left(-\frac{237}{2} - \frac{767}{4}w \right)z^3 + \left(-\frac{6299}{4} - 2548w\right)z^5 \\ +  \left(-\frac{2438895}{112} - \frac{563745}{16}w\right)z^7  + \left(-\frac{2455225}{8} - \frac{7945275}{16}w \right)z^9 \\ + \left(-\frac{1517389435}{352} - 6974965w\right)z^{11}  + \left(-\frac{25264737675}{416} - \frac{3144554175}{32}w\right)z^{13} \\ +  \mathcal{O}(z^{15}).
\end{multline*}

Theorem A combined with \cite[Proposition 3]{MT} implies that 
\[
\zeta(l_{\hat{E}}(t) ) -  \frac{21w+13}{2} l_{\hat{E}}(t)  \in \frac{1}{t} + \cO_\gp \llbracket t \rrbracket
\]
for every place $\gp$ of $H$ lying above a prime $p>5$ which splits in $K$. The coefficients of this series belong to $\Q(\sqrt5)$, and a computer calculation shows that the 
first thousand coefficients are indeed $p$-integral for all $p>5$ such that $\left( \frac{-15}{p} \right)=1$.

Theorem \ref{th_main} predicts that, for every place $\gp$ of $H$ lying  above $p>5$ such that $\left( \frac{-15}{p} \right)=-1$, there exists $\mu \in \hat{\cO}_\gp$
such that 
\begin{equation} \label{eq_pasif}
\zeta(l_{\hat{E}}(t) ) -  \frac{21w+13}{2} l_{\hat{E}}(t)  -\mu \frac{1}{p}  l^\phi_{\hat{E}}(t^p) \in \frac{1}{t}+ \cO_\gp \llbracket t \rrbracket.
\end{equation}
In other words, the theorem claims that, while $\mu  \in \hat{\cO}_\gp$ depends on $\gp$, the quantity $(21w+13)/2$ is universal for all primes of good reduction. 

Note that $l_{\hat{E}}(t) \in \Q(w) \llbracket t \rrbracket$ and the Artin map acts on its coefficients simply as the unique non-trivial element of $\Gal\left( \Q(w)/\Q \right)$ sending $w$ to $1-w$. 

For example, let $p=7$. Choosing $\mu \equiv 47 \pmod {7^2}$ we compute the first thousand coefficients of the series (\ref{eq_pasif}) and observe that they are all $7$-integral (the prime $7$ is inert in $\Q(w)$).


\section{Preliminaries} \label{sec_FG}

In this section, we recall briefly the definition of and some basic facts about $\D(F) \simeq H^1_\cris(F)$ for a one-dimensional formal group $F$.  We refer to \cite[Section 2]{BKY} for a detailed survey. All formal group laws under consideration in this paper are one-dimensional, and we suppress this qualifier. We then 
investigate the behavior of a certain basis of $\D(F)$ under a formal group law isomorphism (Proposition \ref{prop_eigenbasis}). In conclusion, we make a simple but useful remark related to the action of the Artin symbol.

Let $p$ be a rational prime. 
Throughout, we assume that $H$ is a finite normal extension of $\Q$, and that $p$ is not ramified in $H$. As in the Introduction, we denote by $\cO \subset H$ the ring of integers, by $\cO_\gp \supset \cO$ the valuation ring,  by $\hat{\cO_\gp}$ its completion, and by $\hat{H_\gp}$ the field of fractions of $\hat{\cO_\gp}$. Then $\hat{H_\gp}$ is a Galois extension of $\Q_p$, with a cyclic Galois group $\Gal (\hat{H_\gp}/\Q_p )$ generated by the Frobenius automorphism. The image of this Galois group under the inclusion 
\[
\Gal\left(\hat{H_\gp}/\Q_p \right) \hookrightarrow  \Gal(H/\Q)
\]
is the decomposition group $D_\gp \subset \Gal(H/\Q)$ generated by the image of Frobenius which is the Artin symbol, and we will denote by $b^\phi$ the action of Frobenius on $b \in \hat{H_\gp}$ with 
\[
b^\phi = \left( \frac{H/\Q}{\gp} \right)(b) \hspace{4mm} \textup{whenever $b \in H \subset \hat{H_\gp}$.}
\]


For a formal power series $\xi \in \hat{H_\gp} (( u ))$, we define the action coefficient-wise, namely for $\xi=\sum_n a(n)u^n$, we write
\[
\xi^\phi:= \sum_{n} a(n)^\phi u^n.
\]

The embeddings $\cO \hookrightarrow \cO_\gp \hookrightarrow \hat{\cO_\gp}$ allow us to consider 
a  formal group law $F=F(X,Y)$ defined over $\cO$ or $\cO_\gp$ as a formal group law over $\hat{\cO_\gp}$. Such a formal group law is determined by its logarithm which we write as
\[
l_F=l_F(u) = \sum_{n \geq 1} \frac{b_F(n)}{n} u^n \hspace{4mm} \text{with $b_F(1)=1$ and $b_F(n) \in \cO_\gp$}.
\]
For such a formal group law, we define following \cite{Katz_crys}, 
\begin{equation} \label{eq_dieud}
\D(F):= Z(F)/B(F),
\end{equation}
where 
\[
Z(F):= \left\{ \xi \in u\hat{H_\gp} \llbracket u \rrbracket  \Big\vert \  \xi' \in \hat{\cO_\gp} \llbracket u \rrbracket \hspace{1mm} \text{and} \hspace{1mm} \xi(F(u,v)) - \xi(u) - \xi(v) \in \hat{\cO_\gp}\llbracket u,v \rrbracket \right\},
\]
where $\xi'$ is the usual formal derivative of the power series $\xi  \in u\hat{H_\gp} \llbracket u \rrbracket$,
and
\[
B(F):=  u\hat{\cO_\gp} \llbracket u \rrbracket \subseteq Z(F).
\]
The map $\xi \mapsto d\xi:=\xi' du$ establishes an isomorphism of the $\hat{\cO_\gp}$-modules $\D(F) \simeq  H^1_\cris(F)$; the latter $\hat{\cO_\gp}$-module is considered in \cite{BKY}. 
We stick to $\D(F)$ since we need neither a pairing on  $H^1_\cris(F)$ nor
the Colmez integration (see \cite[Section 3]{BKY} for a survey). 

We denote by $\tilde{\xi} \in  \D(F) $ the element represented by $\xi \in Z(F) $.

From now on, the modulo $\gp$ reduction of any formal group law under consideration is assumed to be of finite height.

It is proved in \cite[Theorem 5.3.3]{Katz_crys} that $\D(F)$ is a free $\hat{\cO_\gp}$-module whose rank $h=h(F,\gp)$ is equal to the height of the modulo $\gp$ reduction of $F$. 
A completely different proof which yields an explicit basis for this module is presented in  \cite[Proposition 2.3(iii)]{BKY}. We adopt this result to our notations and state it in the following proposition.

\begin{prop} \label{prop_lambda_mu}
For a formal group law $F$ over $\cO_\gp$ whose modulo $\gp$ reduction has height  $h=h(F,\gp)$,
the elements
\[
\ell_{F,1}:=\tilde{l}_F(u) \hspace{3mm} \textup{and} \hspace{3mm} \ell_{F,m} := \frac{1}{p} \tilde{l}_F^{\phi^{m-1}}(u^{p^{m-1}}) \hspace{3mm} \textup{with $m=2, \ldots, h$}
\]
form a basis of $\D(F)$.

\end{prop}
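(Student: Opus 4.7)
The plan is to verify two things: that each candidate $\ell_{F,m}$ represents a class in $\D(F)$, and that the $h$ resulting classes are $\hat{\cO_\gp}$-linearly independent. Freeness of $\D(F)$ of rank $h$ is already supplied by the quoted theorem \cite[Theorem 5.3.3]{Katz_crys}, so linear independence will then force these $h$ elements to be a basis.

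Membership in $\D(F)$ is immediate for $m=1$: the logarithm $l_F$ is additive with respect to $F$, and $l_F'$ has $\gp$-integer coefficients by the assumption on the $b_F(n)$. For $m \geq 2$, the factor $p^{m-2}$ that appears upon differentiating $p^{-1} l_F^{\phi^{m-1}}(u^{p^{m-1}})$ settles the derivative condition. For the cocycle condition, substituting $x = u^{p^{m-1}}, y = v^{p^{m-1}}$ into the Frobenius-twisted identity $l_F^{\phi^{m-1}}(F^{\phi^{m-1}}(x,y)) = l_F^{\phi^{m-1}}(x) + l_F^{\phi^{m-1}}(y)$ reduces it to showing
\[
l_F^{\phi^{m-1}}\bigl(F(u,v)^{p^{m-1}}\bigr) - l_F^{\phi^{m-1}}\bigl(F^{\phi^{m-1}}(u^{p^{m-1}}, v^{p^{m-1}})\bigr) \in p\,\hat{\cO_\gp}\llbracket u, v \rrbracket.
\]
The iterated Frobenius congruence $f(u)^p \equiv f^\phi(u^p) \pmod{p}$, valid for all $f \in \hat{\cO_\gp}\llbracket u \rrbracket$ because $\hat{\cO_\gp}/p$ is a perfect $\F_p$-algebra, yields $F(u,v)^{p^{m-1}} \equiv F^{\phi^{m-1}}(u^{p^{m-1}}, v^{p^{m-1}}) \pmod{p}$. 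This congruence is then transferred through $l_F^{\phi^{m-1}}$ by Taylor expansion, which survives despite the $1/n$ denominators in $l_F$ because $v_p(p^k/k!) \geq 1$ for every $k \geq 1$ and every prime $p$.

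For linear independence, suppose $\sum_{m=1}^h c_m \ell_{F,m}$ is represented by an element of $B(F) = u \hat{\cO_\gp}\llbracket u \rrbracket$. I would read off the coefficient of $u^{p^{k-1}}$ in the representing series for each $k = 1, \ldots, h$; since $b_F(1) = 1$, the term $\ell_{F,m}$ first contributes at $u^{p^{m-1}}$ with coefficient $1/p$ (and $\ell_{F,1}$ contributes $1$ at $u$), producing a lower-triangular system of $\gp$-adic congruences modulo ascending powers of $p$. The height-$h$ hypothesis forces extra divisibility among the coefficients $b_F(p^j)$ for $j < h$ (equivalently, $[p]_F(u) \equiv 0 \pmod{(p, u^{p^h})}$), keeping the diagonal entries of this system $\gp$-adic units; inverting the system propagates the vanishing of the left-hand sides down to all $c_m$, as in the computation of \cite[Proposition 2.3(iii)]{BKY}.

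The main technical obstacle I anticipate is the Taylor-expansion step in the membership verification: the mod-$p$ congruence of the two arguments of $l_F^{\phi^{m-1}}$ must be transferred cleanly to a congruence of values despite the denominators present in the logarithm itself, and the bookkeeping of $p$-adic valuations has to be done carefully. The linear-independence computation, while requiring the precise input of the height hypothesis, is a relatively mechanical unwinding by comparison.
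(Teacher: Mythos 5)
Your verification that each $\ell_{F,m}$ represents a class in $\D(F)$ is sound: the derivative computation is right, and the transfer of the congruence $F(u,v)^{p^{m-1}}\equiv F^{\phi^{m-1}}(u^{p^{m-1}},v^{p^{m-1}})\pmod{\gp}$ through the logarithm is exactly the content of the paper's Lemma~\ref{lemma_honda} (Honda's Lemma~4), applied with $A=F(u,v)^{p^{m-1}}$ and $B=F^{\phi^{m-1}}(u^{p^{m-1}},v^{p^{m-1}})$; your Taylor-expansion bookkeeping is an equivalent way of packaging the same estimate. (Note that the paper does not actually prove this proposition: it is quoted from \cite[Proposition 2.3(iii)]{BKY}, with the rank statement taken from \cite[Theorem 5.3.3]{Katz_crys}.)

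The gap is in the second half. Over the discrete valuation ring $\hat{\cO_\gp}$, $h$ linearly independent elements of a free module of rank $h$ need \emph{not} form a basis: the element $p$ is linearly independent in the free rank-one module $\hat{\cO_\gp}$ but does not generate it. So ``membership $+$ independence $+$ known rank'' does not close the argument; you must prove that the $\ell_{F,m}$ \emph{span} $\D(F)$, i.e.\ that every $\xi\in Z(F)$ agrees, modulo $u\hat{\cO_\gp}\llbracket u\rrbracket$, with an $\hat{\cO_\gp}$-linear combination of $l_F(u)$ and the $p^{-1}l_F^{\phi^{m-1}}(u^{p^{m-1}})$. That spanning statement is precisely the substantive content of \cite[Proposition 2.3(iii)]{BKY} (and is what makes (\ref{eq_xi_lambda_mu}), hence the definition of $A^{(\gp)}(\Lambda)$, possible); a Nakayama-style fix would again require a generation statement modulo $\gp$, which your triangular system does not provide. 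Separately, the independence computation itself is underdetermined as written: the coefficients of $u^{p^{k-1}}$ for $k\le h$ of an element of $B(F)$ are merely $\gp$-integral, not zero, so extracting them yields only congruences for the $c_m$ modulo bounded powers of $p$ and can never force $c_m=0$ exactly. One must use the coefficients of $u^{p^k}$ for \emph{all} $k$, together with the valuations of the $b_F(p^k)$ coming from the height hypothesis, to conclude that $v_\gp(c_m)$ is unbounded and hence $c_m=0$.
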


In particular let $h(F,\gp)=1$ or $2$. Then, for a power series $\xi \in Z(F)$, there exist unique (assuming $\mu=0$ whenever $h(F,\gp)=1$) quantities $\lambda,\mu \in \hat{\cO_\gp} $ such that
\begin{equation} \label{eq_xi_lambda_mu}
\xi(u) - \lambda l_F (u)- \mu \frac{1}{p} l_F^\phi(u^p) \in \hat{\cO_\gp} \llbracket u \rrbracket.
\end{equation}


Let $F$ and $G$ be  formal group laws over $\cO_\gp$ with their logarithms $l_F$ and $l_G$ correspondingly, and let $t : G \rightarrow F$ be a homomorphism (over $\cO_\gp$). Then $t \in u \cO_\gp\llbracket u \rrbracket$ and it is easy to see (e.g. \cite[Corollary IV.4.3]{Silverman}) that 
\begin{equation} \label{eq_hom_log}
l_F(t(u)) = t'(0)l_G(u).
\end{equation}
The quantity 
\[
\alpha : = t'(0) \in \cO_\gp
\]
determines the homomorphism uniquely since $t(u)=l_F^{-1}(\alpha l_G(u))$.
More generally, we have a homomorphism of $\hat{\cO_\gp}$-modules $t^* :  \D(F) \rightarrow \D(G)$ well-defined by 
\[
(t^*\xi)(u) := \xi(t(u))
\]
for a representative $\xi \in Z(F)$  of an element  $\tilde{\xi} \in \D(F)$.
The following proposition is obviously true when $h(F,\gp)=h(G,\gp)=1$ in view of (\ref{eq_hom_log}). 

\begin{prop} \label{prop_eigenbasis}

The bases $\{ \ell_{F,m} \}_{m=1, \ldots, h(F,\gp)}$ and $\{ \ell_{G,m} \}_{m=1, \ldots, h(G,\gp)}$ diagonalize $t^*$.
Specifically, 
\[
t^*\ell_{F,m} = \alpha^{\phi^{m-1}} \ell_{G,m} \hspace{4mm} \textup{for $m=1,\ldots,h(G,\gp)$}
\]

\end{prop}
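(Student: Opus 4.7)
The plan is to unwind the definitions of both sides modulo $B(G) = u\hat{\cO_\gp}\llbracket u \rrbracket$ and verify the asserted identity one basis vector at a time, using the fundamental identity $l_F \circ t = \alpha \cdot l_G$ from (\ref{eq_hom_log}) together with a Frobenius congruence for powers.

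The case $m=1$ is immediate: $t^* \ell_{F,1}$ is represented by $l_F(t(u)) = \alpha\,l_G(u)$, so $t^* \ell_{F,1} = \alpha \ell_{G,1}$ in $\D(G)$. For $m \geq 2$, I would represent $t^*\ell_{F,m}$ by $\frac{1}{p} l_F^{\phi^{m-1}}(t(u)^{p^{m-1}})$. Applying $\phi^{m-1}$ coefficientwise to $l_F \circ t = \alpha\,l_G$ and substituting $u^{p^{m-1}}$ for $u$ gives the clean identity $l_F^{\phi^{m-1}}\bigl(t^{\phi^{m-1}}(u^{p^{m-1}})\bigr) = \alpha^{\phi^{m-1}}\,l_G^{\phi^{m-1}}(u^{p^{m-1}})$. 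Thus the whole statement reduces to the congruence
\[
l_F^{\phi^{m-1}}\bigl(t(u)^{p^{m-1}}\bigr) \;\equiv\; l_F^{\phi^{m-1}}\bigl(t^{\phi^{m-1}}(u^{p^{m-1}})\bigr) \pmod{p\,\hat{\cO_\gp}\llbracket u \rrbracket},
\]
which upon dividing by $p$ yields $t^*\ell_{F,m} = \alpha^{\phi^{m-1}} \ell_{G,m}$ in $\D(G)$.

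The key input is the standard Frobenius congruence $f(u)^p \equiv f^\phi(u^p) \pmod{p\,\hat{\cO_\gp}\llbracket u \rrbracket}$ valid for any $f \in \hat{\cO_\gp}\llbracket u \rrbracket$, which comes from the fact that $\phi$ lifts the absolute Frobenius of the residue field. Iterating yields $t(u)^{p^{m-1}} \equiv t^{\phi^{m-1}}(u^{p^{m-1}}) \pmod{p\,\hat{\cO_\gp}\llbracket u \rrbracket}$. Writing $A = t(u)^{p^{m-1}}$ and $B = t^{\phi^{m-1}}(u^{p^{m-1}})$ with $A = B + pR$, $R \in \hat{\cO_\gp}\llbracket u \rrbracket$, the difference $l_F^{\phi^{m-1}}(A) - l_F^{\phi^{m-1}}(B)$ expands as $\sum_{n \geq 1}\frac{b_F(n)^{\phi^{m-1}}}{n}\bigl((B+pR)^n - B^n\bigr)$; the binomial coefficient $\binom{n}{k}$ brings a factor of $n$ in the $k=1$ term that cancels the $1/n$ denominator, yielding $pR \cdot (l_F^{\phi^{m-1}})'(B) \in p\,\hat{\cO_\gp}\llbracket u \rrbracket$, since the invariant differential $l_F'$ has integer coefficients.

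The main obstacle is handling the tail terms $k \geq 2$, where one must argue that $\binom{n}{k}(pR)^k B^{n-k}/n = \binom{n-1}{k-1}\,(pR)^k B^{n-k}/k$ remains $p$-integral; this comes down to the elementary inequality $v_p(p^k/k) = k - v_p(k) \geq 1$ for all $k \geq 1$, which is where the interplay between the denominators in the formal group logarithm and the binomial expansion is delicate but ultimately favorable.
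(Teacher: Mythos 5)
Your argument is correct and follows essentially the same route as the paper's proof: both handle $m=1$ via (\ref{eq_hom_log}), then reduce the case $m\geq 2$ to the congruence $l_F^{\phi^{m-1}}\bigl(t(u)^{p^{m-1}}\bigr) \equiv l_F^{\phi^{m-1}}\bigl(t^{\phi^{m-1}}(u^{p^{m-1}})\bigr) \pmod{p\,\hat{\cO_\gp}\llbracket u \rrbracket}$ using the iterated Frobenius congruence $t(u)^{p^{m-1}} \equiv t^{\phi^{m-1}}(u^{p^{m-1}})$, and finally to the $p$-divisibility of $(A^n-B^n)/n$ when $A\equiv B \pmod p$. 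The only (cosmetic) difference is in that last step: the paper invokes its Lemma~\ref{lemma_honda} (Honda's Lemma~4, reducing to $n=p^\nu$ and estimating $\textup{ord}_p(m!)$), whereas you use the identity $\tfrac{1}{n}\binom{n}{k}=\tfrac{1}{k}\binom{n-1}{k-1}$ together with $k-v_p(k)\geq 1$, an equally valid and arguably cleaner elementary computation.
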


We are interested only in the case when $\alpha \in \cO_\gp^*$ for our applications. Note that the assumption that  $\alpha \in \cO_\gp^*$ is a unit implies that $h(F,\gp)=h(G,\gp)$. Indeed, otherwise it is easy to see (\cite[Section 2.2.3]{Lubin}) that the modulo $\gp$ reduction of $t$ must be zero, and $t \in u\gp\llbracket u\rrbracket$ implies $\alpha \in \gp$.

Before proving Proposition \ref{prop_eigenbasis}, we state an immediate corollary for later use. 

\begin{cor} \label{cor_stability}

Assume that $h(F,\gp)=h(G,\gp)=1$ or $2$.

Let $\xi \in Z(F)$, and let $\lambda$ and $\mu$ be as in \textup{(\ref{eq_xi_lambda_mu})}.

\begin{itemize}

\item[(a)]
Let $t: G \rightarrow F$ be a strict isomorphism, that is $\alpha=t'(0)=1$. Then 
\[
t^*\xi - \lambda l_G (u)- \mu \frac{1}{p} l_G^\phi(u^p) \in \hat{\cO_\gp} \llbracket u \rrbracket,
\]

\item[(b)]
Let $t: F \rightarrow F$ be a weak endomorphism, that is $\alpha=t'(0) \in \cO_\gp^*$ is a unit. Then

\[
t^*\xi - \alpha\lambda l_F (u)- \alpha^\phi\mu \frac{1}{p} l_F^\phi(u^p) \in \hat{\cO_\gp} \llbracket u \rrbracket.
\]

\end{itemize}

\end{cor}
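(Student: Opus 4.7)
The corollary is essentially a direct translation of Proposition \ref{prop_eigenbasis} via the definition of $\lambda$ and $\mu$ in (\ref{eq_xi_lambda_mu}). My plan is to read (\ref{eq_xi_lambda_mu}) as an identity in $\D(F)$, apply the $\hat{\cO_\gp}$-linear map $t^*$, diagonalize using Proposition \ref{prop_eigenbasis}, and translate back to a congruence of power series.

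First, observe that by Proposition \ref{prop_lambda_mu} the elements $\ell_{F,1} = \tilde{l}_F$ and (when $h(F,\gp)=2$) $\ell_{F,2} = \frac{1}{p}\widetilde{l_F^\phi(u^p)}$ form an $\hat{\cO_\gp}$-basis of $\D(F)$, and similarly for $G$. The defining condition (\ref{eq_xi_lambda_mu}) states precisely that, as elements of $\D(F)$,
\[
\tilde{\xi} = \lambda\,\ell_{F,1} + \mu\,\ell_{F,2},
\]
with the second term understood to be zero when $h(F,\gp)=1$ (in which case $\mu = 0$ by convention).

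Now apply the $\hat{\cO_\gp}$-module homomorphism $t^*: \D(F) \to \D(G)$ (respectively $\D(F) \to \D(F)$ in part (b)) to both sides. By $\hat{\cO_\gp}$-linearity,
\[
t^*\tilde{\xi} = \lambda\,t^*\ell_{F,1} + \mu\,t^*\ell_{F,2}.
\]
Proposition \ref{prop_eigenbasis} tells us how $t^*$ acts on these basis vectors. In case (a), where $\alpha = 1$, we get $t^*\ell_{F,m} = \ell_{G,m}$ for each admissible $m$, so $t^*\tilde{\xi} = \lambda\,\ell_{G,1} + \mu\,\ell_{G,2}$. In case (b), where $t: F \to F$ so $G = F$, we get $t^*\ell_{F,1} = \alpha\,\ell_{F,1}$ and $t^*\ell_{F,2} = \alpha^\phi\,\ell_{F,2}$, hence $t^*\tilde{\xi} = \alpha\lambda\,\ell_{F,1} + \alpha^\phi\mu\,\ell_{F,2}$.

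Finally, unwind the definitions: $t^*\tilde\xi$ is represented by the power series $(t^*\xi)(u) = \xi(t(u))$, and an equality in $\D(F) = Z(F)/B(F)$ means equality modulo $B(G) = u\hat{\cO_\gp}\llbracket u \rrbracket$ (or $B(F)$) of the representing series. Thus in case (a),
\[
t^*\xi - \lambda\,l_G(u) - \mu\,\tfrac{1}{p}l_G^\phi(u^p) \in \hat{\cO_\gp}\llbracket u \rrbracket,
\]
and in case (b),
\[
t^*\xi - \alpha\lambda\,l_F(u) - \alpha^\phi\mu\,\tfrac{1}{p}l_F^\phi(u^p) \in \hat{\cO_\gp}\llbracket u \rrbracket,
\]
as desired. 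There is no real obstacle beyond ensuring that the convention $\mu = 0$ in the height-one case is compatible on both sides (it is, because a strict isomorphism or weak endomorphism preserves the height of the reduction, as already remarked, so $h(F,\gp) = h(G,\gp)$ and the one-dimensional case of the statement reduces to scaling $\tilde{l}_F$).
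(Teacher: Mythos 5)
Your proposal is correct and is essentially the paper's own argument: the paper's proof is literally ``substitute $t(u)$ for $u$ into (\ref{eq_xi_lambda_mu}) and use Proposition \ref{prop_eigenbasis},'' and your version just phrases that same substitution as applying the $\hat{\cO_\gp}$-linear map $t^*$ to the identity $\tilde{\xi}=\lambda\ell_{F,1}+\mu\ell_{F,2}$ in $\D(F)$ and diagonalizing. The closing remark about height preservation matches the observation already made in the paper after Proposition \ref{prop_eigenbasis}.
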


\begin{proof}

Substitute $t(u) \in u \cO_\gp \llbracket u \rrbracket$ for $u$ into (\ref{eq_xi_lambda_mu}), and use Proposition \ref{prop_eigenbasis}.

\end{proof}

Our proof of Proposition \ref{prop_eigenbasis} will employ the following adaptation to our notations of \cite[Lemma 4]{Honda} which we will prove here for the convenience of the reader.

\begin{lemma} \label{lemma_honda}
Let $A,B \in \hat{\cO_\gp} \llbracket u \rrbracket$, and assume that $A-B \in \gp \hat{\cO_\gp} \llbracket u \rrbracket$.
Then, for $n \geq 1$, 
\[
\frac{A^n-B^n}{n} \in  \gp \hat{\cO_\gp} \llbracket u \rrbracket.
\]
\end{lemma}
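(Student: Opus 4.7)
The natural approach is to reduce to the two features that make the statement nontrivial: the $p$-part of $n$, and then the prime-to-$p$ part.

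First I would factor
\[
A^n - B^n = (A - B)\bigl(A^{n-1} + A^{n-2}B + \cdots + B^{n-1}\bigr),
\]
which immediately gives $A^n - B^n \in \gp\hat{\cO_\gp}\llbracket u \rrbracket$ from the hypothesis. The whole issue is therefore the division by $n$: if $n$ is a unit in $\hat{\cO_\gp}$ there is nothing to do, so write $n = p^k m$ with $\gcd(m,p)=1$. Since $p$ is unramified in $H$, we have $\gp\hat{\cO_\gp} = p\hat{\cO_\gp}$, so I can work entirely with the prime $p$.

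The heart of the argument is the claim
\[
A^{p^k} \equiv B^{p^k} \pmod{p^{k+1}\hat{\cO_\gp}\llbracket u \rrbracket} \qquad \text{for every } k \geq 0,
\]
which I would prove by induction on $k$. The base case $k=0$ is the hypothesis. For the inductive step, write $A^{p^k} = B^{p^k} + p^{k+1} C$ with $C \in \hat{\cO_\gp}\llbracket u \rrbracket$ and expand
\[
A^{p^{k+1}} = \bigl(B^{p^k} + p^{k+1}C\bigr)^p
= B^{p^{k+1}} + \sum_{j=1}^{p} \binom{p}{j}\bigl(B^{p^k}\bigr)^{p-j}\bigl(p^{k+1}C\bigr)^j.
\]
For $1 \leq j \leq p-1$ the binomial coefficient $\binom{p}{j}$ is divisible by $p$, so the $j$-th term is divisible by $p^{1 + j(k+1)} \subseteq p^{k+2}$ (using $p \geq 2$); the $j=p$ term is divisible by $p^{p(k+1)} \subseteq p^{k+2}$. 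This gives the inductive step.

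Having established the claim, I conclude as follows: $(A^{p^k} - B^{p^k})/p^k$ lies in $p\hat{\cO_\gp}\llbracket u \rrbracket$, and then
\[
\frac{A^n - B^n}{n} = \frac{1}{m}\cdot\frac{A^{p^k} - B^{p^k}}{p^k}\cdot\sum_{i=0}^{m-1}\bigl(A^{p^k}\bigr)^{m-1-i}\bigl(B^{p^k}\bigr)^{i},
\]
where $1/m \in \hat{\cO_\gp}$ since $\gcd(m,p)=1$, and the sum lies in $\hat{\cO_\gp}\llbracket u \rrbracket$. Hence $(A^n - B^n)/n \in p\hat{\cO_\gp}\llbracket u \rrbracket = \gp\hat{\cO_\gp}\llbracket u \rrbracket$, as required. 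The only step that requires any thought is the binomial estimate in the induction; everything else is formal.
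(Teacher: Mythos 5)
Your proof is correct, and it shares the paper's overall skeleton: both arguments reduce to the prime-power case $n=p^k$ via the factorization $A^{n}-B^{n}=(A^{p^k}-B^{p^k})\cdot\sum_i (A^{p^k})^{m-1-i}(B^{p^k})^i$ together with the fact that the prime-to-$p$ part of $n$ is a unit, and both ultimately establish the same intermediate statement $A^{p^k}\equiv B^{p^k}\pmod{p^{k+1}\hat{\cO_\gp}\llbracket u\rrbracket}$. Where you differ is in how that statement is proved. The paper writes $B=A+pC$ (legitimate because $p$ is unramified, exactly as in your reduction $\gp\hat{\cO_\gp}=p\hat{\cO_\gp}$), expands $(A+pC)^{p^\nu}$ in one shot, and is then forced to verify $\textup{ord}_p\bigl(\binom{p^\nu}{m}p^{m-\nu}\bigr)\geq 1$ for all $1\leq m\leq p^\nu$, which it does via Legendre's formula $\textup{ord}_p(m!)=(m-S_m)/(p-1)<m$. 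You instead induct on $k$, raising to the $p$-th power one step at a time, so the only arithmetic input is the elementary fact that $p\mid\binom{p}{j}$ for $1\leq j\leq p-1$ (plus the trivial bound $p(k+1)\geq k+2$ for the $j=p$ term). Your route trades a single valuation computation for an induction and is arguably more elementary and self-contained; the paper's is shorter on the page but leans on the digit-sum formula. Both are complete and correct, and your preliminary factorization $A^n-B^n=(A-B)(\cdots)$, while true, is not actually needed for the final argument.
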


\begin{proof}
Write $n=n_0p^\nu$ with $p \nmid n_0$. The statement is obvious for $\nu=0$. Since $(A^{n_0p^\nu}-B^{n_0p^\nu})/(A^{p^\nu}-B^{p^\nu}) \in \hat{\cO_\gp} \llbracket u \rrbracket$, it suffices to prove that, for  $\nu \geq 1$,
\[
\frac{A^{p^\nu}-B^{p^\nu}}{p^\nu}  \in \gp \hat{\cO_\gp} \llbracket u \rrbracket.
\]
Since $p$ is unramified, we can write $B= A+ p C$ with $C \in   \hat{\cO_\gp} \llbracket u \rrbracket$. It suffices to prove that for $1 \leq m \leq p^\nu$ 
\[
\binom{p^\nu}{m}p^{m-\nu} \equiv 0 \pmod p.
\]
That follows from
\begin{multline*}
\binom{p^\nu}{m}p^{m-\nu}= p^\nu(p^\nu-1) \ldots (p^\nu-m+1) \frac{1}{m!}p^{m-\nu} \\ = (p^\nu-1) \ldots (p^\nu-m+1) \frac{p^m}{m!} \equiv 0 \pmod p
\end{multline*}
because $\textup{ord}_p(m!) = (m-S_m)/(p-1) <m$, where $S_m>0$ is the sum of digits of $m$ written in base $p$.

\end{proof}


\begin{proof}[Proof of Proposition \ref{prop_eigenbasis} ]

For $m=1$, the equality $t^*l_{F,1}=\alpha l_{G,1}$ follows immediately from (\ref{eq_hom_log}) (and implies the proposition in the case when $h(F,\gp)=h(G,\gp)=1$ as it was already observed). We thus have to prove that, for $k=m-1 \geq 1$,
\[
\frac{1}{p}l_F^{\phi^k}(t(u)^{p^k}) - \alpha^{\phi^k} \frac{1}{p} l_G^{\phi^k}(u^{p^k}) \in \cO_{\gp} \llbracket u \rrbracket.
\]
Apply the Artin symbol to  (\ref{eq_hom_log}) coefficient-wise $k$ times to obtain
\[
l_F^{\phi^k}(t^{\phi^k}(u)) = \alpha^{\phi^k} l_G^ {\phi^k} (u).
\]
Observe that
\begin{multline*}
l_F^{\phi^k}(t(u)^{p^k})-\alpha^{\phi^k} l_G^{\phi^k}(u^{p^k}) = l_F^{\phi^k}(t(u)^{p^k})- l_F^{\phi^k}(t^{\phi^k}(u^{p^k})) \\ =\sum_{n \geq 1} b_F(n)^{\phi^k} \ 
\frac{ t(u)^{p^kn} - t^{\phi^k}(u^{p^k})^n }{n}.
\end{multline*}
We  now apply Lemma \ref{lemma_honda} with $A=t(u)^{p^k}$ and $B=t^{\phi^k}(u^{p^k})$ (the congruence $A-B \in \gp \cO_\gp \llbracket u \rrbracket$ can be obtained by induction in $k$) to conclude the proof.
\end{proof}


Let now $E$ be an elliptic curve defined over $H$ which admits the affine Weierstrass model (\ref{eq_we}) with $g_2,g_3 \in \cO_\gp$, and assume that $p>2$.
We denote by $\hat{E}$ the formal group law associated with $E$ with respect to the parameter $u=-2x/y$. Specifically, let
\[
x=\wp(l)=\frac{1}{l^2}+\sum_{n \geq 2} c_n l^{2n-2} \in \frac{1}{l^2}+\Q[g_2,g_3]\llbracket l \rrbracket,
\]
where 
\[
c_2=\frac{1}{20} g_2, \hspace{3mm} c_3=\frac{1}{28}g_3, \hspace{3mm} c_n=\frac{3}{(n-3)(2n+1)}\sum_{m=2}^{n-2} c_mc_{n-m} \hspace{3mm} \textup{for $n \geq 4$}
\]
is the Laurent expansion around zero of the Weierstrass $\wp$-function. Then the power series 
\[
u(l)= -2x/y=-2\wp(l)/\wp'(l) = l + \ldots \in l\Q[g_2,g_3]\llbracket l \rrbracket
\]
has its leading coefficient $1$. We invert this power series setting 
\[
l_{\hat{E}}(u):=u+ \ldots \in  u\Q[g_2,g_3]\llbracket u \rrbracket,
\]
and define 
\begin{equation} \label{eq_fg_def}
\hat{E}(X,Y)=u(l_{\hat{E}}(X)+l_{\hat{E}}(Y))
\end{equation}
 to be the formal group law whose logarithm is $l_{\hat{E}}$. A minor adaptation of the argument provided in \cite[Section IV.1]{Silverman} allows us to conclude that the formal group law $\hat{E}$ is actually defined over the ring $\Z[g_2/4,g_3/4]\subseteq \cO_\gp$ while 
\begin{equation} \label{eq_wp_wpprime_integral}
u^2x=u^2\wp(l(u))  \in  1+ \cO_\gp \llbracket u \rrbracket  \hspace{3mm} \textup{and} \hspace{3mm} u^3y=u^3\wp'(l(u)) \in -2+  \cO_\gp\llbracket u \rrbracket . 
\end{equation}

We now consider the expansion around zero of the Weierstrass $\zeta$-function associated with $E$
\[
\zeta(l) = \frac{1}{l} - \sum_{n \geq 2} \frac{c_n}{2n-1}l^{2n-1}. 
\]
Define a formal power series $\psi(u)$ by
\[
\zeta(l_{\hat{E}}(u))=\frac{1}{u} +d_0 + \psi(u), 
\]
where the constant term $d_0 \in \cO_\gp$ is chosen in a way such that $\psi(u) \in  uH\llbracket u \rrbracket$. It follows from \cite[Lemma 3.4]{BKY} (see also \cite[Proposition 12]{Gue_adele} for an alternative, though less elegant argument) that $\psi \in Z(\hat{E})$ assuming $p \geq 3$, and since now on we stick to this assumption. Assume that the elliptic curve $E$ has good reduction at $\gp$. Then the height of the modulo $\gp$ reduction of $\hat{E}$ is either $1$ or $2$ corresponding to the ordinary and supersingular reduction cases. Corollary \ref{cor_stability}(a) allows us to rewrite (\ref{eq_xi_lambda_mu}) as 
\begin{equation} \label{eq_zeta_F_lambda_mu}
\zeta(l_F(u)) - \lambda l_F(u) - \mu \frac{1}{p} l_F^\phi (u^p) \in \frac{1}{u} + \hat{\cO}_\gp \llbracket u \rrbracket
\end{equation}
with $\lambda$ and $\mu$ independent of the choice of a 
formal group law $F$ with logarithm $l_F$ which is strictly isomorphic to $\hat{E}$ (over $\cO_\gp$).



We finish this section with a remark on the action of the Artin symbol on elements of the imaginary quadratic field $K$. Let $d_K<0$ be the discriminant of $K$, and  let $\go$ be an order in $K$. Let now $H$ be the ring class field of $\go$. It is well-known (e.g. \cite[Lemma 9.3]{Cox}) that $H$ is a Galois extension of $\Q$ with with Galois group 
$\Gal(H/\Q) \simeq \Gal(H/K) \rtimes (\Z/2\Z)$, specifically one has
\[
1 \rightarrow \Gal (H/K) \rightarrow \Gal(H/\Q) \rightarrow \Gal(K/\Q) \rightarrow 1.
\]
In particular, every $\sigma \in \Gal(H/\Q)$ decomposes in a unique way as $\sigma = \sigma_1 \sigma_2$ with $\sigma_1 \in  \Gal (H/K)$ and $\sigma_2 \in \Gal (K/\Q)$, and, for $b \in K \subset H$, we can calculate $\sigma(b) = \sigma_2(b)$. 

In particular,  for an element $a + b \sqrt{d_K}$ we have
\begin{equation} \label{eq_artin_action}
\begin{split}
(a+b \sqrt{d_K})^\phi := \left( \frac{H/\Q}{\gp} \right)(a+b \sqrt{d_K}) \\ = a+b  \left( \frac{H/\Q}{\gp} \right) (\sqrt{d_K}) = a + b \left( \frac{d_K}{p} \right)  \sqrt{d_K}.
\end{split}
\end{equation}
The last equality is implied by the fact that, for any $\sigma_2 \in \Gal (K/\Q) \simeq \Z/2\Z$, we have $\sigma_2(\sqrt{d_K} )= \pm \sqrt{d_K}$, and, for the Artin map, 
it must be that
\[
 \left( \frac{H/\Q}{\gp} \right)  (\sqrt{d_K}) \equiv \left(\sqrt{d_K}\right)^p \pmod \gp,
\]
while 
\[
d_K^{\frac{p-1}{2}} \equiv  \left( \frac{d_K}{p} \right) \pmod p
\]
by the definition of the  Legendre symbol.


\section{Proof of Theorem \ref{th_main}} \label{sec_proof_main}

Recall that $H$ is the ring class field of an order $\go$ in an imaginary quadratic field $K$ of discriminant $d_K<0$ and $\ga$ is a proper $\go$-ideal.

For the elliptic curve $E=\C/\Lambda$ with $\Lambda=\Omega \ga$ 
we assume that $\Omega$ in (\ref{g_Omega}) is chosen such that 
$g_2,g_3 \in \cO_\gp$ and  affine Weiersttrass model (\ref{eq_we}) has good reduction at $\gp$. (Note that $1728g_2^3/(g_2^3-27g_3^2)=j(\ga) \in H$, and any choice of $\Omega$ such that $g_1,g_2 \in H$ is admissible for all but  finitely many places $\gp$ of $H$.)

For $\alpha \in \go$, we denote by $[\alpha] \in \End(E)$ the corresponding endomorphism such that $[\alpha]^*\omega = \alpha \omega$, where the differential $\omega=dx/y$. We denote by $\hat{E}$ the formal group law adopted to the parameter $u=-2x/y$ defined  in (\ref{eq_fg_def}) by its logarithm 
$l_{\hat{E}}(u) \in uH\llbracket u \rrbracket$.


The endomorphism $[\alpha]$ is defined over $H$ (see \cite[Theorem 2.2.2(b)]{Silverman_a}) and induces an endomorphism $t_\alpha \in \End(\hat{E})$ of the formal group law $\hat{E}$. Specifically, $t_\alpha \in uH\llbracket u \rrbracket$ is the unique power series such that 
\begin{equation} \label{eq_t_alpha}
l_{\hat{E}}(t_\alpha (u)) = \alpha l_{\hat{E}}(u).
\end{equation}
It is easy to see that, for all but possibly finitely many places $\gp$ of $H$, we have  $t_\alpha \in u\cO_\gp \llbracket u \rrbracket$. 
Indeed, the functions $\wp(\alpha z; \Lambda)$ and $\wp'(\alpha z; \Lambda)$ are $\Lambda$-periodic, therefore can be expressed as rational functions (over $H$) of
$x=\wp(z;\Lambda)$ and $y=\wp'(z;\Lambda)$. Thus 
\[
t_\alpha(u) = -2\wp(\alpha z;\Lambda)/\wp'(\alpha z;\Lambda) \in H(x,y),
\]
and substituting the series expansions (\ref{eq_wp_wpprime_integral}) into this rational function we support the claim. A more subtle analysis (essentially similar to but a bit more  involved than our proof of Proposition \ref{prop_zeta_alpha_zeta_integral} below) is carried out in \cite{Streng}. In particular, \cite[Theorem 2.9]{Streng} adopted to our setting states that
\[
t_\alpha(u) \in u\cO_\gp \llbracket u \rrbracket
\]
for all places $\gp$ of good reduction.

We now substitute $t_\alpha(u)$ for $u$ into (\ref{eq_zeta_F_lambda_mu})  with $l_F=l_{\hat{E}}$ and  use (\ref{eq_t_alpha}) along with Corollary \ref{cor_stability}(b) to obtain
\[
\zeta(\alpha l_{\hat{E}}(u);\Lambda) - \lambda \alpha l_{\hat{E}}(u) - \mu \alpha^\phi \frac{1}{p}l_{\hat{E}}(u^p) \in \frac{1}{\alpha u} + \hat{\cO}_\gp \llbracket u \rrbracket
\]
assuming that $ \alpha \notin \gp$. 
Combining this equation with  (\ref{eq_zeta_F_lambda_mu}) (taken again for  $l_F=l_{\hat{E}}$) we conclude that
\begin{equation} \label{eq_fg_zeta_zeta}
\zeta(\alpha l_{\hat{E}}(u);\Lambda) -  \alpha^\phi \zeta(l_{\hat{E}}(u);\Lambda) - \lambda( \alpha - \alpha^\phi) l_{\hat{E}}(u) \in \frac{\alpha^{-1}-\alpha^\phi}{u} + \hat{\cO}_\gp\llbracket u \rrbracket.
\end{equation}


\begin{prop} \label{prop_zeta_alpha_zeta_integral} 

One can choose $\alpha\in \go$ (e.g. $\alpha = f\sqrt{d_K}$ works just fine) such that for some non-zero integer $M \in \Z$,
\[
M \left( \zeta(\alpha l_{\hat{E}}(u);\Lambda) - \bar{\alpha} \zeta(l_{\hat{E}}(u);\Lambda) - A (\Lambda)(\alpha - \bar{\alpha}) l_{\hat{E}}(u) \right)   \in M\frac{\alpha^{-1}-\bar{\alpha}}{u} + \cO_\gp\llbracket u \rrbracket,
\]
where the bar denotes the complex conjugation.
\end{prop}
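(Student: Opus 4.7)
Plan: I would introduce the auxiliary function
\[
\cF(z) := \zeta(\alpha z;\Lambda) - \bar\alpha\,\zeta(z;\Lambda) - A(\Lambda)(\alpha - \bar\alpha) z
\]
and proceed in three steps: first verify that $\cF$ is $\Lambda$-periodic, next express $\cF$ as a rational function in $x = \wp(z;\Lambda)$ and $y = \wp'(z;\Lambda)$, and finally extract integrality via \textup{(\ref{eq_wp_wpprime_integral})} after substituting $z = l_{\hat E}(u)$. The choice $\alpha = f\sqrt{d_K}$ is convenient because then $\alpha \in \go$ (so $\alpha\Lambda \subseteq \Lambda$), $\alpha \notin \gp$ (since $p \nmid fd_K$), and $\bar\alpha = -\alpha$. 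Periodicity follows from \textup{(\ref{eq_zeta_A_B})}: for $\omega \in \Lambda$, $\zeta(z+\omega;\Lambda) - \zeta(z;\Lambda) = A(\Lambda)\omega + B(\Lambda)\bar\omega$, and since $\alpha\omega \in \Lambda$ as well, also $\zeta(\alpha(z+\omega);\Lambda) - \zeta(\alpha z;\Lambda) = A(\Lambda)\alpha\omega + B(\Lambda)\bar\alpha\bar\omega$. A direct computation then gives $\cF(z+\omega) - \cF(z) = 0$: the $B(\Lambda)$-terms cancel identically and the $A(\Lambda)$-terms cancel thanks to the correction $-A(\Lambda)(\alpha-\bar\alpha)z$. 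Hence $\cF$ descends to a meromorphic function on $E$ with simple poles on $E[\alpha]$, having residue $\alpha^{-1}-\bar\alpha$ at $0$ and residue $\alpha^{-1}$ at each nontrivial $w \in E[\alpha]$.

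Next I would obtain an explicit rational expression for $\cF$ by differentiating the $\sigma$-function factorization $\sigma(\alpha z;\Lambda) = C_0\,e^{P(z)}\prod_{w \in \alpha^{-1}\Lambda/\Lambda}\sigma(z-w;\Lambda)$, whose quadratic exponent $P$ is pinned down by matching quasi-periodicity on both sides to yield $P''(z)/2 = A(\Lambda)\alpha(\alpha-\bar\alpha)/2$. This produces $\alpha\zeta(\alpha z;\Lambda) = A(\Lambda)\alpha(\alpha-\bar\alpha)z + \beta_1 + \sum_w \zeta(z-w;\Lambda)$ for some $\beta_1 \in H$. I would then expand each summand using the classical addition formula $\zeta(z-w;\Lambda) = \zeta(z;\Lambda) - \zeta(w;\Lambda) + \tfrac{1}{2}(y + \wp'(w))/(x - \wp(w))$; the sums $\sum_{w\neq 0}\zeta(w;\Lambda)$ and $\sum_{w\neq 0}\wp'(w)/(x - \wp(w))$ vanish by pairing $w \leftrightarrow -w$ (with the $2$-torsion case $w = -w$ contributing zero since $\zeta, \wp'$ are odd), while $\sum_{w\neq 0} 1/(x - \wp(w)) = \Phi'_\alpha(x)/\Phi_\alpha(x)$. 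After the $\zeta(z;\Lambda)$ coefficients telescope, one arrives at
\[
\cF(z) = \frac{y\,\Phi'_\alpha(x)}{2\alpha\,\Phi_\alpha(x)} + C,
\]
where $\Phi_\alpha(x) := \prod_{w \in E[\alpha]\setminus\{0\}}(x - \wp(w;\Lambda))$ is monic of degree $d := N(\alpha) - 1$ and $C = \beta_1/\alpha \in H$.

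For the final step, since $\alpha \notin \gp$, the group scheme $E[\alpha]$ is étale over $\cO_\gp$, so $\Phi_\alpha \in \cO_\gp[x]$. Substituting $z = l_{\hat E}(u)$ and invoking \textup{(\ref{eq_wp_wpprime_integral})}, I would find $u^{2d}\Phi_\alpha(x) \in 1 + u\cO_\gp\llbracket u \rrbracket$, $u^{2(d-1)}\Phi'_\alpha(x) \in d + u\cO_\gp\llbracket u \rrbracket$, and $u^3 y \in -2 + u\cO_\gp\llbracket u \rrbracket$. Combining these with $2\alpha \in \cO_\gp^*$, the quotient $y\Phi'_\alpha(x)/(2\alpha\Phi_\alpha(x))$ lies in $\cO_\gp((u))$ with principal part $-d/(\alpha u) = (\alpha^{-1}-\bar\alpha)/u$ and the rest in $\cO_\gp\llbracket u \rrbracket$. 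Choosing any $M \in \Z_{\neq 0}$ with $MC \in \cO_\gp$ then completes the proof. I expect the main technical obstacle to be deriving the explicit closed form for $\cF$, which relies on the $\sigma$-function factorization and the quasi-period computation pinning down the $z^2$ coefficient of $P$; an alternative would be partial-fraction decomposition of $\cF$ directly from the residue data established in the first step.
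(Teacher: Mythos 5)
Your proposal is correct and follows essentially the same route as the paper: $\Lambda$-periodicity of $\cF$ is deduced from (\ref{eq_zeta_A_B}), $\cF$ is identified as $\wp'(z)$ times a rational function of $\wp(z)$ whose poles $\wp(w)$ (for $w\in E[\alpha]$, with $N(\alpha)$ prime to $p$) are $\gp$-integral and whose numerators are $1/\alpha$, and the integrality is then extracted by substituting $z=l_{\hat E}(u)$ and invoking (\ref{eq_wp_wpprime_integral}); the paper arrives at the same rational expression via a partial-fraction decomposition and a residue computation rather than your $\sigma$-function factorization. One harmless inaccuracy: for a $2$-torsion point $w$ the value $\zeta(w;\Lambda)$ is a half quasi-period and need not vanish, but this only affects the additive constant $C$, which is forced to be $0$ in any case since $\cF$ and $y\,\Phi_\alpha'(x)/\Phi_\alpha(x)$ are both odd.
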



We postpone a proof of the proposition and complete the proof of Theorem \ref{th_main} now. 

Recall that $A^\gp(\Lambda)= \lambda$ by definition (\ref{eq_def}), so we need to prove that $\lambda=A(\Lambda)$.

Let us first consider the case when $E$ has good supersingular reduction at $\gp$, and the height of the modulo $\gp$ reduction of $\hat{E}$ is $2$. This happens exactly when $p$ is inert in $K$, and (\ref{eq_artin_action}) implies
\[
\alpha^\phi = \bar{\alpha}.
\] 
Combine Proposition \ref{prop_zeta_alpha_zeta_integral} with (\ref{eq_fg_zeta_zeta}) to obtain
\begin{equation} \label{eq_log_bounded}
M(\lambda - A(\Lambda))(\alpha - \bar{\alpha}) l_{\hat{E}}(u) \in \hat{\cO}_\gp\llbracket u \rrbracket.
\end{equation}
Since $l_{\hat{E}}(u) $ has infinitely many zeros, (because the height of the modulo $\gp$ reduction of $\hat{E}$ is assumed to be finite, namely $2$) and a choice of $\alpha \in \go$ with $\alpha \neq \ \bar{\alpha}$ which satisfies all extra conditions  is easily available (e.g. $\alpha = f\sqrt{d_K}$ works), this proves that $\lambda = A(\Lambda)$ as required.

The  case when $E$ has good ordinary reduction follows immediately from \cite[Proposition 3]{MT} combined with a special case of \cite[Theorem 8.0.9]{Katz_RAnEis} quoted above as Theorem A. That case actually served as a motivation and a starting point  for the author. However, in \cite{MT}, the equality $\lambda = A(\Lambda)$ follows from a much stronger result (the existence of the $p$-adic $\sigma$-function), and it may be interesting to see how it follows directly from our considerations. In this case, $p$ splits in $K$,  therefore  (\ref{eq_artin_action}) implies
\[
\alpha^\phi = \alpha,
\]
and (\ref{eq_fg_zeta_zeta}), though  true, becomes useless. However, since the height of the modulo $\gp$ reduction of $\hat{E}$ is $1$, we have (\ref{eq_zeta_F_lambda_mu}) with $\mu=0$. We substitute  $t_\alpha(u) \in u\cO_\gp \llbracket u \rrbracket $ for $u$ into (\ref{eq_zeta_F_lambda_mu})  and combine it with (\ref{eq_zeta_F_lambda_mu}) itself to obtain
\[
\zeta(\alpha l_{\hat{E}}(u);\Lambda) -  \bar{\alpha} \zeta(l_{\hat{E}}(u);\Lambda) - \lambda( \alpha - \bar{\alpha}) l_{\hat{E}}(u) \in \frac{\alpha^{-1}-\bar{\alpha}}{u} + \hat{\cO}_\gp\llbracket u \rrbracket,
\]
which we can again combine with Proposition \ref{prop_zeta_alpha_zeta_integral} to conclude that $\lambda = A(\Lambda)$ by the same reasoning as above.

In order to accomplish the proof of Theorem \ref{th_main} we now have only to prove Proposition \ref{prop_zeta_alpha_zeta_integral}.

\begin{proof}[Proof of  Proposition \ref{prop_zeta_alpha_zeta_integral}]

We 
start with the $\Lambda$-periodic function (\ref{eq_zeta_A_B}) on the complex plane.
 Since $\alpha \Lambda \subseteq \Lambda$, the function $\zeta(\alpha z;\Lambda) - A\alpha z - B\bar{\alpha} \bar{ z}$ is also $\Lambda$-periodic. Combining these two functions we conclude that the meromorphic function 
\[
\zeta(\alpha z;\Lambda) - \bar{\alpha} \zeta(z;\Lambda) - A(\alpha - \bar{\alpha})z 
\]
on the complex plain is  $\Lambda$-periodic and odd.
It follows that we have 
\[
\zeta(\alpha z;\Lambda) - \bar{\alpha} \zeta(z;\Lambda) - A(\alpha - \bar{\alpha})z = \wp'(z;\Lambda) \frac{P(\wp(z;\Lambda))}{Q(\wp(z;\Lambda))}
\]
with polynomials $P,Q \in \C[x]$ with $\deg P= \deg Q-1$ (otherwise the right side would not have a pole of order exactly one at infinity as the left side has). Consider a partial fraction decomposition for the rational function on the right:
\begin{equation} \label{eq_partial_fraction}
\zeta(\alpha z;\Lambda) - \bar{\alpha} \zeta(z;\Lambda) - A(\alpha - \bar{\alpha})z = \wp'(z;\Lambda) \sum_m \frac{t_m}{\wp(z,\Lambda)-s_m}.
\end{equation}
Note that all poles of the function on the left of (\ref{eq_partial_fraction}) are simple and located at $\mathcal{N}(\alpha) = \alpha \bar{\alpha}$-division points. 

It is easy to pick $\alpha \in \go$ (e.g. $\alpha=f\sqrt{d_K}$ works) such that all $s_m$, which are the values of the Weierstrass $\wp$-function at the division points, are  $\gp$-integers (in an extension of $H$) by \cite[Theorem VII.3.4(a)]{Silverman}; the same is also true about the values of $\wp'(z,\Lambda)$ at these points.

We now calculate and equate the residues at these poles of the functions on the left and right sides of (\ref{eq_partial_fraction}) in order to conclude that $t_m \in \bar{\Q}$. Assume that we have a pole at $z=z_0$, and $z_0 \not\in \Lambda/2$ 
Thus $\wp'(z_0;\Lambda) \neq 0,\infty$, and $\wp(z_0;\Lambda) = s_m \neq \infty$. The corresponding term in the right side is
\[
\frac{\wp'(z;\Lambda)t_m}{\wp(z;\Lambda) -s_m} = \frac{\wp'(z;\Lambda)t_m}{\wp'(z_0;\Lambda)(z-z_0) + \ldots }
\]
with the residue of $t_m$ at $z_0$.
For the left side, our assumption implies that the $\zeta(z;\Lambda)$ is holomorphic at $z_0$, while $\zeta(\alpha z;\Lambda)$ has a simple pole (as $\alpha z_0 \in \Lambda$) with a residue of
\[
\frac{1}{2\pi i} \oint_{C_1} \zeta(\alpha z;\Lambda) dz = \frac{1}{2\pi i}  \frac{1}{\alpha} \oint_{C_2} \zeta(v;\Lambda) dv =  \frac{1}{\alpha}, 
\]
where $C_1$ is a small contour around $z_0$ and $C_2$ = $\alpha C_1$ is a small contour around $\alpha z_0 \in \Lambda$. Thus  $t_m =1/\alpha$. 
A similar calculation in the case when  $z_0 \in \Lambda/2$ while $z_0 \not \in \Lambda$ (note that $\wp'(z_0, \Lambda) = 0$ in this case) yields the same $t_m =1/\alpha$ outcome.

We thus see that there exists non-zero $M \in \Z$ such that, after multiplying by $M$, all numerators on the partial fraction decomposition in the right of (\ref{eq_partial_fraction}) become algebraic integers, and observe that so are all quantities $s_m$.

We now multiply (\ref{eq_partial_fraction}) by $M$, substitute $l(u)$ for $z$,  and use (\ref{eq_wp_wpprime_integral}) to conclude that the series expansion in $u$ on the right has all $\gp$-integral coefficients (though possibly in some extension of $H$), while all coefficients in the expansion of the left-hand side belong to $H$. 
\end{proof}


\section{Modular forms approach} \label{sec_modular_proof}

In this Section, we indicate an alternative approach to Theorem \ref{th_main}. Although the results we can prove in this way are somewhat weaker, the interpretation based on modular parametrization of CM elliptic curves  which implies similar results is, in our opinion, interesting.

We start with some preliminary considerations. 
For a weight $2$ cusp form $g=\sum_{n \geq 1} b(n) q^n$ such that $b(1)=1$, denote by $\cG$ the formal group law defined by its logarithm
\begin{equation} \label{eq_lG_def}
l_\cG(u):= \sum_{n \geq 1} \frac{b(n)}{n} u^n.
\end{equation}
While in general we can say little about $\cG$ besides it is defined over a characteristic zero field $\Q(b(2),b(3), \ldots)$, some special cases are of particular interest.  
An important theorem of Honda \cite[Theorem 5]{Honda} (some details pertaining to the primes $2$ and $3$ are cleaned up later in \cite{Hill}) states that, for an elliptic curve over $\Q$, the formal group associated with its N\'eron model is strictly isomorphic over $\Z$ to the formal group associated with its $L$-series. In particular, assume that $g$ is a primitive Hecke eigenform with $b(n) \in \Z$. Eichler - Shimura theory associates to $g$ an elliptic curve $E_g$ over $\Q$, and Honda's theorem implies that the formal group associated with the  N\'eron model of $E_g$ is strictly isomorphic to $\cG$ over $\Z$. 

We adopt the definition of modularity from \cite{Mazur}: we say that an elliptic curve $E$ over an algebraic number field $H$ is {\sl modular} (over $H$) whenever for some $N$ there exists a non-trivial map 
\[
\psi : X_1(N) \rightarrow E
\]
defined over $H$. Note that $H$ does not need to be the minimal field of definition for $E$, and we will always assume that $H$ is a normal extension of $\Q$ taking the normal closure if necessary. A composition with translation allows us to always assume that the cusp at infinity maps to the neutral element of $E$.

Of course, $E_g$ is modular (and so is any rational elliptic curve), while there are  modular elliptic curves besides those which are rational. We now introduce notations in order to state a weaker (but much simpler and more general) version of Honda's theorem for modular elliptic curves.

Denote by $\cO \subset H$ the ring of integers, and let
\[
\gr:= \cO\left[ \frac{1}{M} \right]
\]
for some integer $M$. We will never specify $M$ and it suffices to observe that, for all but finitely many prime ideals $\gp$ of $\cO$, the localizations at $\gp$ of $\gr$ and $\cO$ coincide.

For a modular (over $H$) elliptic curve $E$ choose a model (\ref{eq_we}) with differential $\omega = dx/y$. Then the pull-back is 
\begin{equation} \label{eq_omega_lift}
\psi^*\omega = g(q) \frac{dq}{q}
\end{equation}
with a weight $2$ cusp form $g=\sum_{n \geq 1} b(n) q^n$. 
We now assume that the map $\psi$ is not ramified at the cusp $i \infty$, equivalently, $b(1) \neq 0$.
Change  equation (\ref{eq_we}) to
\begin{equation} \label{eq_we_b(1)}
y^2=4x^3 - b(1)^4 g_2 x - b(1)^6 g_3
\end{equation}
so that $\omega=dx/y$ pulls back to a cusp form (which we still denote by $g = \sum b(n) q^n$) with $b(1)=1$.


\begin{thm} \label{th_honda_gen}

In the above notations, let $\hat{E}$ be the formal group law over $\gr$ associated with the Weierstrass equation \textup{(\ref{eq_we_b(1)})}.

The formal group law $\cG$ associated to $g$ as above  is strictly isomorphic to $\hat{E}$ over $\gr$ and therefore is defined over $\gr$.

\end{thm}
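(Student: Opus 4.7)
The plan is to construct an explicit strict isomorphism $\cG \to \hat{E}$ directly from the modular parametrization $\psi$ itself. I would let $u(q) := \psi^* u$ denote the pullback to $X_1(N)$ of the formal parameter $u = -2x/y$ on $E$ at the origin, expressed in the local parameter $q$ at the cusp $i\infty$. Because $\psi$ sends $i\infty$ to the origin of $E$ and the normalization $b(1) = 1$ guarantees that $\psi$ is unramified at $i\infty$, the series $u(q)$ takes the form $u(q) = q + O(q^2)$ with coefficients a priori in $H$.

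The second step is to match the logarithms. Since the invariant differential satisfies $\omega = dl_{\hat{E}}(u)$ in the formal neighborhood of the origin, the identity $\psi^*\omega = g(q)\,dq/q$ from (\ref{eq_omega_lift}) can be rewritten as
\[
d\bigl(l_{\hat{E}}(u(q))\bigr) = \psi^*\omega = g(q)\,\frac{dq}{q} = d\bigl(l_\cG(q)\bigr).
\]
Integrating and using that both antiderivatives vanish at $q = 0$ yields $l_{\hat{E}}(u(q)) = l_\cG(q)$, which is precisely the identity defining $u(q)$ as a strict isomorphism from $\cG$ to $\hat{E}$ as formal power series over $H$.

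The third and most delicate step is to promote this to an isomorphism defined over $\gr$. The functions $\psi^*x$ and $\psi^*y$ lie in the function field of $X_1(N)/H$ and have $q$-expansions in $H(\!(q)\!)$ of pole orders $2$ and $3$ respectively at $i\infty$. Since only finitely many denominators appear in their expressions as modular functions, they can be absorbed into $M$; more cleanly, after enlarging $M$ to include the primes dividing $N$ and the primes of bad reduction of the Weierstrass model \textup{(\ref{eq_we_b(1)})}, the morphism $\psi$ extends to a morphism between smooth proper models over $\gr$, and $u$ restricts to a uniformizer at the image of $i\infty$. This forces $u(q) \in q\,\gr\llbracket q\rrbracket^{*}$. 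Once integrality is secured, the identity $l_{\hat{E}}(u(q)) = l_\cG(q)$ exhibits $u(q)$ as a strict isomorphism $\cG \to \hat{E}$ over $\gr$, and since $\hat{E}$ is defined over $\gr$ and $u(q)$ is invertible in $\gr\llbracket q\rrbracket$, transporting the addition law of $\hat{E}$ back along $u$ shows that $\cG$ itself is defined over $\gr$.

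The main obstacle I expect is the integrality step. The formal identification of $u(q)$ as a strict isomorphism of formal groups via the integration of $\psi^*\omega$ is essentially automatic once (\ref{eq_omega_lift}) is in hand, but showing that $u(q)$ (equivalently the $q$-expansions of $\psi^*x$ and $\psi^*y$) has $\gp$-integral coefficients for all but finitely many primes $\gp$ of $H$ is precisely what justifies working with a localization $\gr = \cO[1/M]$ rather than $\cO$ itself. Depending on the level of detail desired, this can be handled either by direct manipulation of the denominators that arise in the modular expansions, or by invoking the existence of smooth integral models of $X_1(N)$ and of $E$ over $\gr$; the latter is cleaner and more in keeping with the spirit of \cite{Honda}.
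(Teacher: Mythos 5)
Your proposal is essentially identical to the paper's proof: both pull back the formal parameter $u=-2x/y$ along $\psi$, deduce $l_{\hat{E}}(\psi^*u(q))=l_\cG(q)$ from $\psi^*\omega=g(q)\,dq/q$ together with the definition of $l_\cG$, and secure integrality of $\psi^*u$ by enlarging $M$. The only difference is that you elaborate the integrality step (via integral models or bounded denominators), which the paper dispatches with the parenthetical ``enlarging $M$ if necessary.''
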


\begin{rem}
Even in the case when the elliptic curve $E$ is rational, one does not obtain the full strength of the Honda theorem in this way. While one can consider a  N\'eron model for $E$ instead of (\ref{eq_we}), one still cannot guarantee that $b(1) \neq 0$.
\end{rem}

\begin{proof}

Consider $u=-2x/y$ as a function on $E$. Its pull-back $\psi^*u \in q + q^2\gr\llbracket q \rrbracket$ (enlarging $M$ if necessary) 
is a strict isomorphism $ \cG \rightarrow \hat{E}$ of formal group laws since we have that $l_{\hat{E}}((\psi^*u(q)) = l_\cG(q)$ which follows  from the definition (\ref{eq_lG_def}) combined with (\ref{eq_omega_lift}).

\end{proof}

Examples of modular elliptic curves which are not rational come from Shimura's construction \cite{Shimura71} of complex multiplication elliptic curves $E$ which are factors of Jacobians $J_1(N)$ of modular curves $X_1(N)$.
We thus have in these cases a non-trivial morphism 
\begin{equation} \label{eq_modular_param}
\psi : X_1(N) \rightarrow J_1(N) \rightarrow E,
\end{equation}
where the first arrow is the canonical embedding, and the second is the projection. 
A converse statement, \cite[Proposition 1.6]{Shimura_cl}, implies that whenever a complex multiplication elliptic curve $E$ appears as a factor of $J_1(N)$, the pull-back $g=\psi^*\omega$ of the invariant differential on $E$ is a linear combination of cusps forms 
whose Mellin transforms are $L$-functions associated with primitive Hecke characters of the imaginary quadratic field $K$. 
It follows from \cite[Theorem 4.8.2]{Miyake} that $d_K^2 \vert N$, where $d_K$ is the discriminant of $K$.
All these are CM-forms, and so is $g$ as their linear combination. Specifically, we have
\begin{equation} \label{eq_char_twist}
\sum_{n>0} \left( \frac{d_K}{n} \right) b(n) q^n  = \sum_{n>0} b(n) q^n.
\end{equation}
Furthermore, it follows from \cite[Lemma 4.1(i)]{Gonzales_Lario} that $b(1) \neq 0$.

As an example, the elliptic curves considered by Gross in \cite{Gross} (those with complex multiplication by the ring of integers $\go_K$ of an imaginary quadratic field $K$ with a prime discriminant $d_K<-3$) have modular parametrization (\ref{eq_modular_param}) (defined over the Hilbert class field of $K$) by \cite[Theorem 20.1.1]{Gross}. In this case, $N=d_K^2$, and one can even use $X_0(N)$ instead of $X_1(N)$.
A more general family of complex multiplication elliptic curves which have modular parametrization (\ref{eq_modular_param}) is described in \cite{Wortmann}.

We now state and prove a version of Theorem \ref{th_main} for the CM elliptic curves which are modular in the above sense.

\begin{thm} \label{th_main_modular}
Assume that $E$ be a CM elliptic curve  which admits modular parametrization \textup{(\ref{eq_modular_param})} defined over  $H$ (which is normal  over $\Q$).  Assume furthermore  that $E$ has a model 
 \textup{(\ref{eq_we})} defined over $H$. 
 
 Then for all but possibly finitely many primes $\gp$ of $H$, 
 \[
 A^{(\gp)}(\Lambda) = A(\Lambda).
 \]
In particular, the value $A^{(\gp)}(\Lambda) \in \hat{H_\gp}$
in fact lies in $\cO_\gp \subset H \subset \hat{H_\gp}$ and is independent of $\gp$. 

\end{thm}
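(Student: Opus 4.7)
The plan is to transform Theorem \ref{th_main_modular} into a $q$-expansion integrality assertion on $X_1(N)$ and to exploit the CM identity \textup{(\ref{eq_char_twist})} through the quadratic character $\chi = \left(\frac{d_K}{\cdot}\right)$ of $K/\Q$. By Theorem \ref{th_honda_gen} and Corollary \ref{cor_stability}(a), it suffices to exhibit, for all but finitely many $\gp$, some $\mu \in \hat{\cO}_\gp$ with
\[
\zeta(l_\cG(q);\Lambda) - A(\Lambda)\, l_\cG(q) - \mu \frac{1}{p}\, l_\cG^\phi(q^p) \in \frac{1}{q} + \hat{\cO}_\gp \llbracket q \rrbracket,
\]
for uniqueness of $(\lambda,\mu)$ in \textup{(\ref{eq_xi_lambda_mu})} then forces $A^{(\gp)}(\Lambda) = A(\Lambda)$. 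The split (ordinary) case follows at once from Theorem A combined with \cite[Proposition 3]{MT}; the substantive case is primes $p$ inert in $K$, where $\chi(p) = -1$.

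First I would use the formal identities $\zeta'(z;\Lambda) = -\wp(z;\Lambda)$ and $\psi^*(dz) = g(q)\, dq/q$ to compute
\[
q\frac{d}{dq}\bigl(\zeta(l_\cG(q);\Lambda) - A(\Lambda)\, l_\cG(q)\bigr) = -g(q)\bigl(\wp(l_\cG(q);\Lambda) + A(\Lambda)\bigr),
\]
which is the $q$-expansion at $i\infty$ of the pullback via $\psi$ of the meromorphic differential $-(\wp + A(\Lambda))\, dz$ on $E$, equivalently of a weight-$2$ meromorphic modular form on $\Gamma_1(N)$ defined over $H$. Its Fourier coefficients are therefore $\gp$-integral for almost all $\gp$, and termwise integration yields $\gp$-integral coefficients of $\zeta(l_\cG(q);\Lambda) - A(\Lambda)l_\cG(q)$ at every index $n$ coprime to $p$, with a denominator of at most $p^{v_p(n)}$ at indices $n$ divisible by $p$.

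Second, these denominators must be matched by those of $\mu\, l_\cG^\phi(q^p)/p$ for a single $\mu \in \hat{\cO}_\gp$. Introducing the twist operator $T_\chi\bigl(\sum c_n q^n\bigr) := \sum \chi(n) c_n q^n$ and invoking \textup{(\ref{eq_char_twist})}, one has $T_\chi l_\cG = l_\cG$, while $T_\chi l_\cG^\phi(q^p) = \chi(p)\, l_\cG^\phi(q^p) = -l_\cG^\phi(q^p)$ for inert $p$; averaging the defining relation with its $T_\chi$-twist decouples the $\lambda$- and $\mu$-contributions. The Hecke recursion forced by $b(p) = 0$ (a consequence of \textup{(\ref{eq_char_twist})} at $n = p$), namely $b(p^{2k+1}) = 0$ and $b(p^{2k}) = (-p)^k$, dictates the precise $p$-adic denominator profile of $l_\cG^\phi(q^p)/p$ and aligns it with that of the integrated form index by index. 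The principal obstacle is the coefficient-by-coefficient bookkeeping: one must verify that a single $\mu \in \hat{\cO}_\gp$ simultaneously absorbs the denominators of $\zeta(l_\cG(q);\Lambda) - A(\Lambda)l_\cG(q)$ at \emph{every} $p$-divisible index, i.e.\ that the congruences among the Fourier coefficients of $-g(\wp(l_\cG(q);\Lambda) + A(\Lambda))$ required for this uniformity do follow from the CM (theta-series) structure of $g$. Once these congruences are in hand, the desired $\gp$-integrality is established with $\lambda = A(\Lambda)$ and a well-defined $\mu$ depending on $\gp$, completing the proof.
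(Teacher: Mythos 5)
Your overall strategy---reduce to exhibiting a single $\mu\in\hat{\cO}_\gp$ making $\zeta(l_\cG(q);\Lambda)-A(\Lambda)l_\cG(q)-\mu\frac{1}{p}l_\cG^\phi(q^p)$ integral, then invoke uniqueness in (\ref{eq_xi_lambda_mu})---is sound in outline, and your differentiation step is correct: $q\frac{d}{dq}\bigl(\zeta(l_\cG(q);\Lambda)-A(\Lambda)l_\cG(q)\bigr)=-g(q)\bigl(\wp(l_\cG(q);\Lambda)+A(\Lambda)\bigr)$ is the pullback of a meromorphic differential on $X_1(N)$ over $H$, so termwise integration does bound the denominator at index $n$ by $p^{v_p(n)}$. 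But this is exactly where the proof stops being a proof. The series $\mu\frac{1}{p}l_\cG^\phi(q^p)$ has coefficient $\mu\,b(m)^\phi/(pm)$ at index $n=pm$; for inert $p$ the CM identity (\ref{eq_char_twist}) gives $b(p)=0$, so this series contributes \emph{nothing} at index $p^2$, and you therefore need the coefficient of $\zeta(l_\cG(q);\Lambda)-A(\Lambda)l_\cG(q)$ at $q^{p^2}$ to be $\gp$-integral already --- a nontrivial congruence among the Fourier coefficients of $-g\cdot(\wp(l_\cG)+A)$ --- and an infinite family of similar congruences at all higher $p$-power indices, all compatible with one and the same $\mu$ determined at index $p$. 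You explicitly defer these congruences (``once these congruences are in hand\dots''), so the argument has a genuine unfilled gap at its central step. Moreover, the recursion $b(p^{2k})=(-p)^k$ that you invoke to control the denominator profile is not available: $g=\psi^*\omega$ is in general only a linear combination of newforms (the paper stresses it need not be a Hecke eigenform), so its coefficients at prime powers satisfy no two-term Hecke recursion; (\ref{eq_char_twist}) only gives $b(n)=0$ when $\left(\frac{d_K}{n}\right)=-1$.

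The paper's proof sidesteps all of this bookkeeping with the twist-averaging device that you gesture at but do not exploit. One does not try to build the integral decomposition with $\lambda=A(\Lambda)$ by hand; its existence with \emph{some} $(\lambda,\mu)$ is already guaranteed by Proposition \ref{prop_lambda_mu}, and the only task is to identify $\lambda$. On the archimedean side one pulls back the full $\Lambda$-periodic function (\ref{eq_zeta_A_B}), including the anti-holomorphic term $B\overline{l_\cG(q)}$, to get a non-meromorphic function $f$ on $X_1(N)$; since $l_\cG\otimes\chi=l_\cG$ while $\overline{l_\cG(q)}\otimes\chi=-\overline{l_\cG(q)}$ (as $\chi(-1)=-1$), the average $f+f\otimes\chi=\zeta(l_\cG(q);\Lambda)+\zeta(l_\cG(q);\Lambda)\otimes\chi-2A(\Lambda)l_\cG(q)$ is genuinely meromorphic on $X_1(N)$ over $H$, and bounded denominators give integrality of this averaged series outright. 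On the $p$-adic side the same averaging applied to (\ref{eq_zeta_lambda_mu}) kills the $\mu$-term, since $\bigl(l_\cG^\phi(q^p)\bigr)\otimes\chi=\chi(p)\,l_\cG^\phi(q^p)=-l_\cG^\phi(q^p)$ for inert $p$ (and $\mu=0$ for split $p$), yielding the same averaged series with $A(\Lambda)$ replaced by $A^{(\gp)}(\Lambda)$. Subtracting gives $(A(\Lambda)-A^{(\gp)}(\Lambda))\,l_\cG(q)\in\cO_\gp\llbracket q\rrbracket$, and the unbounded denominators of $l_\cG$ (finite height) force equality. If you want to salvage your route, the cleanest fix is to replace your coefficient-by-coefficient matching with exactly this averaging: it converts the problem of proving infinitely many congruences into the single statement that a meromorphic function on $X_1(N)$ over $H$ has almost-everywhere integral $q$-expansion.
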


\begin{rem}

While it is possible to make the argument more precise at some points, the  employment of modular parametrization does not allow the author to entirely get rid of the possibility of exceptional primes of good reduction; Theorem \ref{th_main} is more accurate.


\end{rem}

\begin{proof}

Corollary \ref{cor_stability}(a) allows for the freedom to choose any formal group law $F$ isomorphic to $\hat{E}$ over $\cO_\gp$ and its logarithm $l_F$ in equation (\ref{eq_zeta_lambda_mu}) which defines the quantity $A^{(\gp)}(\Lambda)$ according to definition (\ref{eq_def}). Theorem \ref{th_honda_gen} allows us to make the choice $F=\cG$ with the logarithm $l=l_{\cG}$ associated with the modular form $g$ defined by  the modular parametrization in (\ref{eq_omega_lift}).

Recall that for a function $f$ on $X_1(N)$ and a primitive Dirichlet character $\chi$ modulo $r$ the twist $f \otimes \chi$ is
defined by
\[
f \otimes \chi := W(\bar{\chi})^{-1}\sum_{u=1}^r f(\tau+u/r),
\]
where $W(\bar{\chi})$ is the Gauss sum associated with the complex conjugate Dirichlet character $\bar{\chi}$. 
Obviously,
\[
\left(\sum a(n) q^n \right) \otimes \chi = \sum \chi(n)a(n) q^n,
\]
and we use that to  extend the definition of character twist to formal powers series.

Let $\chi=\left( \frac{d_K}{\cdot} \right)$ be the quadratic character associated with $K$.
Since $d_K^2 \vert N$, it follows from \cite[Proposition 3.64]{Shimura_arith} that 
 $f \otimes \chi$ is a function on $X_1(N)$. 

Observe that (\ref{eq_char_twist}) implies 
\[
l_\cG(q) \otimes \chi = l_\cG(q) \hspace{4mm} \textup{and} \hspace{4mm} \overline {l_\cG(q)} \otimes \chi = - \overline {l_\cG(q)},
\]
where the second equality follows from $\chi(-1)=-1$.
The pull back of (\ref{eq_zeta_A_B}) 
\[
f:=\zeta(l_\cG(q);\Lambda) - A(\Lambda)l_\cG(q) -B \overline {l_\cG(q)}
\]
is a (manifestly non-meromorphic) function on $X_1(N)$. 
We conclude that
\[
f+f \otimes \chi = \zeta (l_\cG(q);\Lambda) + \zeta (l_\cG(q);\Lambda) \otimes \chi -2  A(\Lambda)l_\cG(q)
\]
is a meromorphic function on $X_1(N)$. A standard bounded denominators argument 
implies that, as a formal power series in $q$,
\begin{equation} \label{eq_zeta_q}
\zeta (l_\cG(q);\Lambda) + \zeta (l_\cG(q);\Lambda) \otimes \chi -2  A(\Lambda)l_\cG(q) \in \frac{2}{q} + \gr \llbracket q \rrbracket.
\end{equation}
\begin{rem}

The bounded denominators argument claims that a function $h$ on $X_1(N)$, defined over an algebraic number field $H$, has a $q$-expansion with bounded denominators: 
$h \in \gr \llparenthesis q \rrparenthesis$. 

It follows from the consideration of algebraic models for $X_1(N)$ (see e.g. \cite[Proposition 7.5.2 and Theorem 7.7.1]{DS}) that $h \in H(j,f_1)$, where 
$ j=q^{-1}+744+196884q+21493760q^{2}+\cdots \in \Z  \llparenthesis q \rrparenthesis$ is the $j$-function, and $f_1$ (this notation is borrowed from \cite[Section 7.5]{DS} for the reader's convenience) is a certain function on $X_1(N)$.  For the bounded denominators conclusion, it suffices to show that the $q$-expansion (in fact, $q^{1/N}$-expansion) of $f_1$ has algebraic coefficients which are $p$-integral for almost all primes $p$. 
The function $f_1$ is the weight $2$ Eisenstein series (see \cite[Section 4.6]{DS}) $\wp(1/N; \langle 1, \tau \rangle)$ on $\Gamma_1(N)$ times the ratio $E_4(\tau)/E_6(\tau)$ of the Eisenstein series on $SL_2(\Z)$ of weights $4$ and $6$ with well-known $q$-expansions. Now an inspection of the $q$-expansion of $\wp(1/N; \langle 1, \tau \rangle)$  explicitly calculated in \cite[Section 4.6]{DS} finishes the argument.
 

\end{rem}

For a prime $p$, observe the identity 
\[
(l_\cG^\phi(t^p)) \otimes \chi = \chi(p)(l_\cG^\phi \otimes \chi) (t^p),
\]
which holds true for any formal power series in $t$, in particular for $l_\cG^\phi \in H\llbracket t \rrbracket$.
Twist  (\ref{eq_zeta_lambda_mu})  with $\chi$ and add it to itself taking into account that $\mu=0$ whenever $p$ splits in $K$ 
(and keeping in mind  the identity $l_\cG \otimes \chi = l_\cG$ implied by (\ref{eq_char_twist})) to conclude that 
\[
\zeta (l_\cG(t);\Lambda) + \zeta (l_\cG(t);\Lambda) \otimes \chi -2 A^{(\gp)}(\Lambda))l_\cG(t) \in \frac{2}{t} + \cO_\gp \llbracket t \rrbracket.
\]
Combine this observation with (\ref{eq_zeta_q}) (we now regard both  $t$ and $q$ as variables in formal power series) to 
conclude that for all but possibly finitely many primes $\gp$
\[
(A(\Lambda)- A^{(\gp)}(\Lambda)) l_\cG(t) \in \cO_\gp \llbracket t \rrbracket.
\]
Since, for all but finitely many primes $\gp$, the formal group law $\cG$  is isomorphic to $\hat{E}$, and the latter has a reduction of finite height, the formal power series $l_\cG(t)$ has infinitely many zeros. It follows that we must have $A(\Lambda) = A^{(\gp)}(\Lambda)$ as required.

\end{proof}





\section{Appendix} \label{sec_app}

In this appendix, we follow a suggestion by the referee and include the GP 
code for the calculations presented in Section \ref{sec_ex}.

We start with the calculations related to the elliptic curve $E$ itself.

\begin{verbatim}
? d=-15;
\end{verbatim}

The ring of integers in $\Q(\sqrt{d})$ has a basis of $\{1,w\}$, where $w$ can be calculated as \verb|quadgen(d)|.
The $j$-nvariant of $E$ must be \verb| ellj(quadgen(d)) |. We can check that indeed $j$ is a root of the polynomial 

\begin{verbatim}
? polclass(d)
%12 = x^2 + 191025*x - 121287375
\end{verbatim}
generating (over $\Q(\sqrt{-15})$) the Hilbert class field $H$ for the  discriminant $d=-15$:

\begin{verbatim}
? subst(polclass(d),x,ellj(quadgen(d))) 
%11 = 6.326664459872514679 E-28
\end{verbatim}

However, working with the polynomial $x^2 + 191025x - 121287375$  is not very convenient, and we prefer a polynomial with smaller coefficients defining the same number field (over $\Q(\sqrt{-15})$).

\begin{verbatim}
? poly=polredbest(polclass(d))
%13 = x^2 - x - 1
\end{verbatim}

We thus see that $H=\Q(\sqrt{d})(j)=\Q(\sqrt{-15},\sqrt{5})$ (with $\text{Gal}(H/\Q(\sqrt{d})) \simeq \text{Cl}(\Q(\sqrt{d})) \simeq \Z/2\Z$).

We now find $j$ as an element of $\Q(\sqrt{5})$ using the embedding $\Q(j) = \Q(\sqrt{5}) \hookrightarrow H$:

\begin{verbatim}
v=nfisincl(polclass(d),polredbest(polclass(d)))[1];
? j=Mod(1,x^2 - x - 1)*v;
? j
%5 = Mod(-85995*x - 52515, x^2 - x - 1)
\end{verbatim}

We can check that our $j$ is indeed a root of the class polynomial:
\begin{verbatim}
? subst(polclass(d),x,j)
%22 = Mod(0, x^2 - x - 1)
\end{verbatim}

We now initiate the data for an elliptic curve over $\Q(\sqrt{5})$ whose invariant equals $j \in \Q(\sqrt{5})$ (this field is initiated as \verb|bnfinit(poly)|).

\begin{verbatim}
bnf=bnfinit(poly); 
EE=ellinit(ellfromj(Mod(1,poly)*v),bnf); 
\end{verbatim}

While this elliptic curve structure has the correct $j$-invariant

\begin{verbatim}
? EE.j
%27 = Mod(-85995*x - 52515, x^2 - x - 1)
\end{verbatim}

it is far from being optimal for further calculations. In particular, the norm of its discriminant calculated as \verb|factor(norm(EE.disc))| \\ equals $2^{24}\ 3^{42}\ 5^6\ 7^{12}\ 11^{12}$.
We  reduce this elliptic curve  manually: 

\begin{verbatim}
? E=ellinit([EE.a4/77^2/3^6/2^2,EE.a6/77^3/3^9/2^3],bnf);
\end{verbatim}
and now   \verb|factor(norm(E.disc))| equals $2^{12} \ 3^6 \ 5^6$ which may be more convenient and possibly speeds up the calculations.

Instead of the $g_2$ and $g_3$ which we use in \eqref{eq_we_example}, GP calculates the $c$-invariants such that $g_2=c_4/12$ and $g_3 = c_6/216$.
Specifically,
\begin{verbatim}
? E.c4/12
%61 = Mod(11505*x + 7110, x^2 - x - 1)
? E.c6/216
%63 = Mod(356720*x + 220465, x^2 - x - 1)
\end{verbatim}
are the coefficients in our model \eqref{eq_we_example}.

In order to obtain the values of $\tau$ (points on the upper half-plane corresponding to the elliptic curve), we use the GP function \verb|E.omega| which gives us two pairs of periods $(\omega_1, \omega_2)$ which yield two values of $\tau$ corresponding to the two embeddings $\Q(j) = \Q(\sqrt{5}) \hookrightarrow \C$:
\begin{verbatim}
? peri=ellperiods(E.omega[1]);
? peri[1]/peri[2]
%79 = 0.25000000000000000000000000000000000000 
+ 0.96824583655185422129481634994559990271*I
? peri=ellperiods(E.omega[2]);
? peri[1]/peri[2]
%81 = -0.50000000000000000000000000000000000000 
+ 1.9364916731037084425896326998911998054*I
\end{verbatim}
Since we know that $\tau \in \Q(\sqrt{-15})$, we easily find the exact values $\tau=(1+\sqrt{-15})/4$ and $\tau=(-1+\sqrt{-15})/2$.
Note that GP labels the periods in a way opposite to that accepted in our paper. Specifically, while we, following \cite{Katz_RAnEis}, assume that $\Im(\omega_2/\omega_1)>0$, 
in the above code, $\Im($ \verb|peri[1]/peri[2]|$) >0$ (i.e. the roles of $\omega_1$ and $\omega_2$ are integchanged).

In order to distinguish the two field embeddings, we may compare the values of the $j$-invariant corresponding to the two pairs of periods
\begin{verbatim}
? ellj(E.omega[1][1]/E.omega[1][2]) - 
                    subst(lift(E.j),x,polroots(x^2-x-1)[1])
%87 = -4.814824860968089633 E-35 + 0.E-33*I
? ellj(E.omega[2][1]/E.omega[2][2]) - 
                    subst(lift(E.j),x,polroots(x^2-x-1)[2])
%88 = 6.162975822039154730 E-33 + 4.885593982039353826 E-33*I
\end{verbatim}
while the approximate values of the two roots \verb|polroots(x^2-x-1)[1]| and \verb|polroots(x^2-x-1)[2]| of the polynomial are clear.

We are now ready to calculate the value of $E_2^*(\tau)$. We calculate approximately the value at $E_2(\tau)$ as a complex number in a very straightforward way, 
write $\Omega=\omega_2/2\pi i$, 
and approximate $\Omega^{-2}E_2$ since the value in question must belong to $\Q(j)=\Q(\sqrt{5})$. The apparent precision of this approximation allows us to claim the exact value. 
We do that for both pairs of periods, and, since we write the results as an algebraic number, it does not depend on the embedding:

\begin{verbatim}
? Z(q)=sum(n=1,130,q^n*sumdiv(n,X,X)); 
? {for(i=1,2, 
peri=ellperiods(E.omega[i]);
tau=peri[1]/peri[2];
T=2*Pi*I/peri[2];  
qq=exp(2*Pi*I*tau);
E2=1-24*Z(qq) - 3/Pi/imag(tau); 
E2a=lift(bestapprnf(E2*T^2,x^2 - x - 1,polroots(x^2-x-1)[i])); 
print(E2a, " ... to check up: ...", 
           subst(E2a, x,polroots(x^2-x-1)[i]) -E2*T^2 ));}
-126*x - 78 ... 
   to check up: ...-9.477423203504693033 E-38 + 0.E-36*I
-126*x - 78 ... 
   to check up: ...3.009265538105056020 E-36 + 0.E-36*I
\end{verbatim}

From this point on, we need no more approximate calculations. It will be convenient to introduce the variable
\begin{verbatim}
w=quadgen(5);
\end{verbatim}
for a root of the polynomial $x^2-x-1$. We will use this variable in the elliptic curve data so that the quantities in $\Q(\sqrt{5})$ appear more readable as \verb| b + a*w  |  instead of 
\verb | Mod(a*x + b, x^2 - x - 1) | (with $a,b \in \Q$).

\begin{verbatim}
e=ellinit([subst( E.a4.pol,x,w), subst( E.a6.pol,x,w)]);
\end{verbatim}

Note that the elliptic curve data in GP for \verb|e| and \verb|E| are different.

We now calculate the first $500$ terms of the Laurent expansion for the Weierstrass $\wp$-function:
\begin{verbatim}
? \ps 500
   seriesprecision = 500 significant terms
? pw=ellwp(e,z);
? pw+O(z^5)
%15 = z^-2 + (711/2 + 2301/4*w)*z^2 
               + (31495/4 + 12740*w)*z^4 + O(z^5)
\end{verbatim}
This number ($500$) for the series precision suffices for the demonstration. Doing some of the calculations below while possible for, say $1000$, becomes pushing the envelope a bit unless one uses a powerful server. 

Laurent expansion of the Weierstrass $\zeta$-function is then calculated as 
\begin{verbatim}
zw=-intformal(pw,z);
? zw+O(z^7)
%18 = z^-1 + (-237/2 - 767/4*w)*z^3 
             + (-6299/4 - 2548*w)*z^5 + O(z^7)
\end{verbatim}
matching the development indicated in Section \ref{sec_ex}.

The logarithm of formal group law corresponding to this elliptic curve is calculated as 
\begin{verbatim}
? eg= ellformallog(e, 1000,'t);
? eg+O(t^8)
%25 = t + (-711 - 2301/2*w)*t^5 
             + (-94485/4 - 38220*w)*t^7 + O(t^8)
\end{verbatim}

To make sure that we are on the right track, we may substitute the formal group law logarithm into the Weierstrass $\wp$-function for $z$.
Since the reduction of our model of the elliptic curve exists at every prime $p>2$, and the Weierstrass $\wp$-function is a function on the curve, we expect that $2$ is the only prime which may appear in the denominators of the coefficients of the series produced. That is indeed the case up to the precision of our calculations:


\begin{verbatim}
? pwq=subst(pw,z,eg);
? ##
  ***   last result: cpu time 13min, 32,643 ms, 
                    real time 13min, 40,045 ms.
? for(i=1,length(pwq)-5,co=norm(polcoeff(pwq,i)); 
                   den=denominator(co); 
                   den=den/2^valuation(den,2);
                   if(den==1,,print(i, " ... ",den)))
? 
\end{verbatim}

We now plug in the formal group logarithm for $z$ into the Weierstrass $\zeta$-function:
\begin{verbatim}
? zwq=subst(zw,z,eg);
? ##
  ***   last result: cpu time 14min, 29,911 ms, 
                          real time 14min, 35,634 ms.
\end{verbatim}
Using \verb|valuation(zwq,p)| for various primes $p>5$, one can see that  these primes do in fact  appear in the denominators of the coefficients of this series.

We now subtract the formal group logarithm times the factor $(126w+78)/12=(21w+13)/2$ calculated above.
Then we observe that all primes with $\left( \frac{d}{p} \right)=1$ disappear from the denominators of the series:
\begin{verbatim}
? che=zwq-(21*w+13)/2*eg;
? {for(i=1,length(che)-3,co=polcoeff(che,i); 
den=norm(denominator(co)); 
den=den/2^valuation(den,2);
if(den==1,,def=factor(den);
for(j=1,#def~,
if(kronecker(d,def[j,1])==1,
print(i, " ... ",den," ... ",kronecker(d,i))))));
}
? 
\end{verbatim}

However, of course, the prime $p>2$ such that  $\left( \frac{d}{p} \right)=-1$ are still there:
\begin{verbatim}
? valuation(che,7)
%62 = -2
? kronecker(d,7)
%63 = -1
? kronecker(d,11)
%64 = -1
? kronecker(d,13)
%65 = -1
\end{verbatim}

Let us consider the prime $p=7$ (which is the only one which allows for a coefficient whose $7$-adic valuation is $-2$ within the range of our calculations). 
The point of the paper is that every such prime can be eliminated from the denominators by subtracting an appropriate (depending on the specific prime) multiple of the series 
\begin{verbatim}
? eg2=subst(conj(eg),t,t^7);
? eg2+O(t^50)
%97 = t^7 + (-3723/2 + 2301/2*w)*t^35 
                       + (-247365/4 + 38220*w)*t^49 + O(t^50)
\end{verbatim}
without altering our already established coefficient of $(21w+13)/2$.

We start with finding a coefficient with the smallest (negative) $7$-adic valuation:
\begin{verbatim}
? {
for(i=1,length(che)-5,co=polcoeff(che,i); 
val=valuation(co,7);
if(val==-2,print(i, " ... ",val)))
}
343 ... -2
\end{verbatim}

We also have that
\begin{verbatim}
? Mod(1,49)*(polcoeff(che,343)/polcoeff(eg2,343)*7)
%99 = Mod(47, 49) + Mod(0, 49)*w
\end{verbatim}
Thus, we can simply take $47/7$ as the coefficient:
\begin{verbatim}
? che7=zwq-(21*w+13)/2*eg - 47/7*eg2;
\end{verbatim}
and we indeed find that
\begin{verbatim}
? valuation(che7,7)
%101 = 0
\end{verbatim}
as predicted.





\end{document}